\documentclass[11pt]{amsart}

\usepackage[a4paper,margin=1.1in]{geometry}
\usepackage[T1]{fontenc}
\usepackage{amssymb,amsmath,amscd,amsthm,mathtools}
\usepackage{tikz-cd}
\usepackage[dvipsnames]{xcolor}
\usepackage{hyperref}
 \hypersetup{colorlinks=true,linkcolor=Red,citecolor=Red,urlcolor=Red}
\usepackage[nameinlink]{cleveref}
\usepackage[utf8]{inputenc}  
\usepackage[english]{babel}
\usepackage{lmodern}
\usepackage{microtype}
\usetikzlibrary{arrows.meta,calc,positioning,decorations.pathreplacing}
\usetikzlibrary{calc}

\theoremstyle{plain}
\newtheorem{theorem}{Theorem}[section]

\newtheorem{proposition}[theorem]{Proposition}
\newtheorem{lemma}[theorem]{Lemma}
\newtheorem{corollary}[theorem]{Corollary}

\theoremstyle{definition}
\newtheorem{definition}[theorem]{Definition}
\newtheorem{remark}[theorem]{Remark}

\newtheorem{notation}[theorem]{Notation}

\DeclareMathOperator{\Dgn}{Dgn}      
\DeclareMathOperator{\Mat}{Mat}      
\DeclareMathOperator{\Id}{Id}       
 
\newcommand{\mon}{\mathrm{mon}}
\newcommand{\Mon}{\mathrm{Mon}}
\newcommand{\fol}{\mathcal{F}}

\newcommand{\Blor}{\mathrm{Bl}^{\mathrm{or}}}
\newcommand{\ZZ}{\mathbb Z}
\newcommand{\CC}{\mathbb C}
\newcommand{\GL}{\mathrm{GL}}

\theoremstyle{definition}

\newcommand{\C}{\mathbb{C}}

\newcommand{\OO}{\mathcal{O}}
\newcommand{\End}{\mathrm{End}}
\newcommand{\Hom}{\mathrm{Hom}}

\newcommand{\im}{\mathrm{Im}}

\newcommand{\Res}{\mathrm{Res}}

\title[A Poisson--Poincar\'e--Dulac for Poisson Connections ]
{  A Poisson--Poincar\'e--Dulac for Poisson Connections }

\author[Maur\'icio Corr\^ea]{ Maur\'icio Corr\^ea }
\address[Maur\'icio  Corr\^ea]{Dipartimento di Matematica, Universit\`a degli Studi di Bari,  Via E. Orabona 4, I-70125, Bari, Italy
}
\email{mauricio.correa.mat@gmail.com, mauricio.barros@uniba.it}

\author{Miguel Rodr\'iguez Pe\~na}
\address{ Miguel Rodr\'iguez Pe\~na \\
ICEX-UFMG, Departamento de Matem\'atica,
Av. Ant\^onio Carlos, 6627, 31270-901  Belo Horizonte, MG, Brazil.}
\email{amrp2024@ufmg.br}
\date{\today}

\begin{document}

\begin{abstract}
We study Poisson-flat connections with logarithmic poles along a simple normal crossings divisor on a holomorphic Poisson manifold, where flatness is required only along the symplectic foliation. After identifying the relevant logarithmic cotangent Poisson Lie algebroid, we define an Euler--Poisson principal part and a residue theory adapted to the canonical logarithmic Hamiltonian generators. Under a precise effective nonresonance hypothesis, we establish a Poisson Poincar\'e--Dulac theorem: any logarithmic Poisson-flat connection with fixed Euler--Poisson principal part, equivalently with residue-free positive boundary remainder, is holomorphically gauge equivalent to a pure Euler--Poisson normal form with constant commuting residues, and this normal form is unique up to Casimir-valued gauge transformations lying in the effective centralizer of the residues.
To encode both leafwise transport and boundary winding, we construct a twisted leafwise fundamental groupoid via the real oriented blow-up and a 2-pushout that adjoins canonical tangential meridians. In the effectively non-resonant regime, the normal form yields a meridional character determined by the residues, and records the meridional boundary part of the logarithmic Riemann--Hilbert picture in the Poisson setting. Finally, we illustrate the theory with rank-two Poisson modules (Poisson triples) and an explicit local affine family of examples.
\end{abstract}

\maketitle
\section{Introduction}

 Let $(X,\sigma)$ be a holomorphic Poisson manifold. On the Poisson regular locus $X^\circ$ the anchor
$\sigma^\#:\Omega^1_{X^\circ}\to T_{X^\circ}$ has constant rank, and its image
$\fol_\sigma:=\mathrm{Im}(\sigma^\#)$ defines an integrable holomorphic distribution whose leaves
carry holomorphic symplectic forms. Holomorphic Poisson geometry furnishes a natural framework for
integrability and deformation problems, and for algebro-geometric and representation-theoretic
structures such as Poisson modules; see, for instance, \cite{Polishchuk97,LGV13,Vai94}. From a complementary perspective, Poisson structures lie at the heart of deformation quantization
and its geometric realizations \cite{Kon03,CF00}, and they also arise prominently in mathematical
physics, for example in the appearance of noncommutative geometries induced by $B$-fields in string
theory \cite{SW99}; see also Hitchin \cite{Hit11} for the $B$-field action in the theory of
generalized holomorphic bundles.

In this work we study logarithmic Poisson-flat connections with poles along a simple normal
crossings divisor $D\subset X$. After a choice of local frame, such a connection may be represented
by a matrix $\Theta$ whose entries are logarithmic Hamiltonian vector fields tangent to $\fol_\sigma$,
and Poisson-flatness is expressed by the Maurer--Cartan (zero-curvature) equation in the Poisson
direction. The guiding local analytic question is the Poisson counterpart of the regular-singular
normalization problem: \textit{for a fixed logarithmic principal part, can one eliminate higher-order terms
by holomorphic gauge transformations, and to what extent is the resulting normal form unique?
}

For ordinary logarithmic flat connections this circle of questions is governed by the
Poincar\'e--Dulac philosophy: under suitable non-resonance assumptions one solves the
relevant homological equations order-by-order and proves convergence, leading to holomorphic gauge
normal forms together with a residue and monodromy calculus; see Deligne \cite{Deligne70} and the
gauge-oriented account of Novikov--Yakovenko \cite{NY02}. A closely related perspective regards logarithmic connections as representation-theoretic data for Lie groupoids integrating the logarithmic tangent Lie algebroid $T_X(-\log D)$; see \cite{Bischoff20} for the groupoid-theoretic framework and \cite{Bischoff22} for the corresponding normal form theory and the construction of moduli stacks. We also keep in mind the general background on Lie groupoids,
foliations, and their relation to Lie algebroids \cite{MM03}, as well as stack-theoretic foundations
relevant to groupoid moduli \cite{Noohi}. In the irregular setting, Stokes-type groupoids play an
analogous role; see \cite{GLP18} and the systematic exposition in \cite{Sabbah13}.

Because our local models are formulated near boundary strata of a simple normal crossings divisor,
we work naturally with the real oriented blow-up and its toroidal collar charts, as surveyed in
\cite{PopescuPampu25}. Related logarithmic topological constructions, and the associated log Betti
viewpoint, are developed in the Kato--Nakayama framework \cite{KN99}, which provides a useful
conceptual backdrop for the appearance of angular variables and boundary strata.

The Poisson-logarithmic setting is technically distinct from the ordinary one. The flatness
directions are not those of $T_X(-\log D)$, but rather those dictated by the symplectic foliation
$\fol_\sigma$, whose rank equals the Poisson rank and may vary near the degeneracy locus of $\sigma$.
Accordingly, even locally one cannot simply reduce to the usual logarithmic connection theory: the
homological equations must be solved along $\fol_\sigma$, and one must keep explicit control of the
logarithmic Hamiltonian generators arising from $\sigma^\#(dz_i/z_i)$ in simple normal crossings
charts. Moreover, at each step one must separate the genuinely Poisson-direction contributions from
the ambient logarithmic geometry of the pair $(X,D)$, in the sense of Saito's logarithmic forms and
vector fields \cite{Saito80}. This separation is also the conceptual source of a gauge ambiguity
specific to the Poisson context: Casimir functions are constant along $\fol_\sigma$ and thus behave
as leafwise constants, leaving a residual freedom given by Casimir-valued gauge transformations in the effective centralizer
in normal form statements. By contrast, on a holomorphic symplectic open set one has
$\fol_\sigma=T_X$, and the Poisson-direction normalization becomes compatible with the classical
regular-singular picture.

Our principal theorem establishes a Poisson Poincar\'e--Dulac normal form for logarithmic
Poisson-flat connections with fixed Euler--Poisson principal part (\Cref{thm:PPD-local}). We fix a Poisson-flat logarithmic model with commuting residue data and impose an effective non-resonance condition formulated in the quotient of the canonical simple-normal-crossings Hamiltonian generators by their boundary relations. The phrase fixed principal part is used in the regular-singular sense: the holomorphic remainder is residue-free and starts in positive boundary order. Under these hypotheses, any logarithmic Poisson-flat connection with this fixed Euler--Poisson principal part is holomorphically gauge equivalent to the model, and the resulting normal form is unique up to Casimir-valued gauge transformations in the effective centralizer (\Cref{thm:uniq}). This
yields a rigid local classification of Poisson-flat connection data compatible with $\sigma$ near
an SNC divisor, in the regime where the symplectic foliation has smaller (and possibly varying)
rank.

A complementary theme concerns the representation-theoretic encoding of monodromy in the presence
of boundary. The Poisson direction of flatness is intrinsically leafwise, whereas the meridional
topology created by $D$ is not, in general, visible within individual symplectic leaves. In order
to record simultaneously leafwise transport and boundary winding, we study a tangentially twisted
leafwise groupoid associated with $(\fol_\sigma,D)$, constructed by gluing the interior leafwise
fundamental groupoid to a canonical meridional isotropy along the collar of the real oriented
blow-up \cite{PopescuPampu25}. A key technical input on the collar is a leafwise log Poincar\'e lemma
which separates a closed leafwise $1$-form into an exact part plus meridional periods (Lemma~\ref{lem:leafwise-log-poincare}).
In the effectively non-resonant regime, the Poisson Poincar\'e--Dulac normal form
produces commuting residue matrices, and we show that the leafwise monodromy representation extends
to the twisted groupoid with an explicit meridional character determined by these residues. This
provides a Poisson analogue of the groupoid Riemann--Hilbert philosophy for logarithmic and Stokes
phenomena \cite{GLP18,Sabbah13}, adapted to the fact that flatness is imposed only along the
symplectic foliation, and compatible with the groupoid viewpoint on logarithmic connections
\cite{Bischoff20,Bischoff22}. The twisted groupoid should be read as the interior leafwise monodromy together with compatible meridional boundary data; we do not use a global Morita equivalence with the interior groupoid.

We then apply the normal form and the groupoid description to meromorphic Poisson modules and to
Poisson triples tangent to $\fol_\sigma$, obtaining explicit local models and sharp resonance
constraints. These applications dovetail with algebro-geometric aspects of Poisson modules,
including Morita-theoretic features \cite{Cor20}, and with the geometry of projective Poisson
varieties \cite{Pym18}. Throughout, the classical residue calculus for ordinary logarithmic
connections, and its variants, serves as a guiding analogy and a source of useful invariants
\cite{Ohtsuki82}.

Section~\ref{sec:rank2-triples} treats the rank-two case in a concrete form tailored to the normal-form and monodromy questions addressed above. We fix conventions for the Poisson (Lichnerowicz) differential and recall the basic notions of Hamiltonian vector fields, Casimirs, and Poisson vector fields, together with the interpretation of the corresponding low-degree Poisson cohomology groups. We then adopt a working definition of a meromorphic Poisson module as a vector
bundle equipped with a Poisson-direction connection with poles along a divisor, and we explain how,
after choosing a local trivialization in rank two, the data become an $\mathfrak{sl}_2$-valued
connection matrix whose Poisson-flatness is equivalent to an $\mathfrak{sl}_2$ Maurer--Cartan system,
in line with the Lie-algebroid description of Poisson modules \cite{Polishchuk97,LGV13}. The section
also presents an explicit local affine model associated with the diagonal quadratic component
$L(1,1,1,1)$ on $\mathbb{P}^3$, producing Poisson triples and trace-free rank-two Poisson
connections to which the Poisson Poincar\'e--Dulac normalization applies, thereby connecting the
abstract normal-form theory to concrete projective Poisson geometries \cite{Pym18} and to
Morita-theoretic features of Poisson modules \cite{Cor20}.
\section{Geometric and Poisson background}\label{sec:background}

In this section we fix notation and record the basic holomorphic Poisson geometry used throughout.
We also state standing hypotheses on $(X,\sigma,D)$ ensuring that all objects below are well-defined
and admit clean local models in codimension one (in particular, along the SNC divisor $D$).

\subsection{Holomorphic Poisson manifolds}

\begin{definition}\label{def:holPois}
A \emph{holomorphic Poisson manifold} is a complex manifold $X$ equipped with a holomorphic bivector field
$$
\sigma \in H^0\big(X,\wedge^2 T_X\big)
$$
such that the induced bracket on holomorphic functions
$$
\{f,g\}_\sigma := \sigma(df,dg)\qquad (f,g\in\OO_X)
$$
satisfies the Jacobi identity. Equivalently, the Schouten bracket satisfies $[\sigma,\sigma]=0$.
\end{definition}

The bivector $\sigma$ canonically equips the holomorphic cotangent bundle with a Lie algebroid structure.
Let $\sigma^\#:T_X^*\to T_X$ be the holomorphic bundle map
$$
\sigma^\#(\alpha) := \sigma(\alpha,\cdot).
$$
For holomorphic $1$-forms $\alpha,\beta\in\Omega_X^1$, the \emph{Koszul bracket} is
$$
[\alpha,\beta]_\sigma \;:=\;
\mathcal{L}_{\sigma^\#(\alpha)}\beta \;-\; \mathcal{L}_{\sigma^\#(\beta)}\alpha \;-\; d\big(\sigma(\alpha,\beta)\big).
$$
With this bracket and anchor, $\Omega_X^1$ is the sheaf of sections of a holomorphic Lie algebroid,
which we denote by $A_\sigma$.
For a holomorphic function $f\in\OO_X$, the associated \emph{Hamiltonian vector field} is
$$
X_f := \sigma^\#(df)\in T_X,
$$
so that $X_f(g)=\{f,g\}_\sigma$ for all $g\in\OO_X$.
The image sheaf
$$
\mathcal{F}_\sigma := \im(\sigma^\#)\subset T_X
$$
is an (in general singular) involutive holomorphic distribution; its integral manifolds are the
(possibly singular) \emph{symplectic leaves} of $\sigma$. On a smooth leaf $L$, the restriction of $\sigma$
is nondegenerate and inverts a holomorphic symplectic $2$-form $\omega_L$ on $L$.

\begin{definition}\label{def:regular-locus}
Let $X^\circ\subset X$ be the \emph{Poisson-regular locus}, i.e.\ the largest open subset on which
$\sigma^\#$ has locally constant rank. On $X^\circ$ the distribution $\mathcal{F}_\sigma$ is a regular
holomorphic foliation.
\end{definition}
If $\operatorname{rank}_\C(\sigma|_{X^\circ})=2m$, then every leaf in $X^\circ$ has complex dimension $2m$,
and carries a holomorphic symplectic form $\omega_L$.

\subsection{Divisors and logarithmic geometry}

Let $D\subset X$ be a divisor with simple normal crossings.
\begin{definition}\label{def:snc}
A divisor $D \subset X$ has \emph{simple normal crossings} (SNC) if for every point $p\in D$ there exist
local holomorphic coordinates $(z_1,\dots,z_n)$ centered at $p$ such that
$$
D = \{z_1\cdots z_r = 0\}
$$
for some $1\le r\le n$, and each local branch $\{z_i=0\}$ is smooth.
\end{definition}
Write $D=\bigcup_{i=1}^k D_i$ for the decomposition into irreducible components.

\begin{definition}\label{def:logTB}
The \emph{logarithmic tangent sheaf} $T_X(-\log D)$ is the subsheaf of $T_X$ consisting of holomorphic vector
fields tangent to $D$ (equivalently, preserving the ideal sheaf of $D$). Concretely, in an SNC chart
$(z_1,\dots,z_n)$ with $D=\{z_1\cdots z_r=0\}$ one has
$$
T_X(-\log D)
=
\big\langle
z_1\partial_{z_1},\,
\dots,\,
z_r\partial_{z_r},\,
\partial_{z_{r+1}},\,
\dots,\,
\partial_{z_n}
\big\rangle_{\OO_X}.
$$
Dually, the \emph{logarithmic cotangent sheaf} $\Omega^1_X(\log D)$ is locally generated by
$$
\frac{dz_1}{z_1},\,
\dots,\,
\frac{dz_r}{z_r},\,
dz_{r+1},\,
\dots,\,
dz_n.
$$
\end{definition}

When $D$ is SNC, both $T_X(-\log D)$ and $\Omega^1_X(\log D)$ are locally free $\OO_X$-modules of rank $\dim X$.

\subsection{Hypotheses}

Throughout the main parts of the paper we impose the following assumptions.

\begin{itemize}
\item[(H1)] $D=\bigcup_{i=1}^k D_i\subset X$ is an SNC divisor with smooth irreducible components $D_i$.

\item[(H2)] The Poisson bivector $\sigma$ has constant rank in codimension one. Let $2m$ be the generic
complex rank of $\sigma^\#$ on $X$. We assume that the degeneracy locus
$$
\Dgn(\sigma) := \{x\in X : \operatorname{rank}_\C(\sigma_x) < 2m\}
$$
has complex codimension at least $2$. Equivalently, $\sigma^m \in H^0(X,\wedge^{2m}T_X)$ vanishes in codimension at least $2$.

\item[(H3)] Each component $D_i$ is a \emph{Poisson hypersurface}, i.e.\ its ideal sheaf $\mathcal{I}_{D_i}\subset\OO_X$
is a Poisson ideal:
$$
\{\mathcal{I}_{D_i},\OO_X\}_\sigma \subset \mathcal{I}_{D_i}.
$$
Equivalently, $\sigma$ is logarithmically tangent to $D$ in the sense that
$$
\sigma^\#\big(\Omega_X^1(\log D)\big)\subset T_X(-\log D),
$$
or, in local SNC coordinates, Hamiltonian flows preserve each branch $\{z_i=0\}$ of $D$.
\end{itemize}

\begin{remark}\label{rem:H2H3}
We stress that we do \emph{not} assume that $(X,\sigma)$ is generically symplectic (i.e.\ $2m=\dim X$), nor that
$D$ is a degeneracy divisor for $\sigma$. Hypothesis \textup{(H2)} is only a codimension-one regularity condition
on the rank of $\sigma$, while \textup{(H3)} is the geometric condition that $D$ is preserved by Hamiltonian flows.
\end{remark}

\subsection{Local SNC charts and logarithmic Hamiltonians}\label{lem:logHam}

Fix an SNC chart $(z_1,\dots,z_n)$ with $D=\{z_1\cdots z_r=0\}$. For $1\le i\le r$, consider the logarithmic form
$dz_i/z_i \in \Omega_X^1(\log D)$.
Assume \textup{(H3)}. In this  chart the vector fields
$$
X_{\log z_i}\ :=\ \sigma^\# \left(\frac{dz_i}{z_i}\right)
\qquad (1\le i\le r)
$$
extend holomorphically across $D$ and belong to $T_X(-\log D)$. For $i>r$, the Hamiltonian vector fields
$X_{z_i}=\sigma^\#(dz_i)$ are holomorphic (with no logarithmic behavior along $D$).
In fact, fix $1\le i\le r$ and set $f=z_i$, so $D_i=\{f=0\}$ locally. By \textup{(H3)}, the ideal $(f)$ is Poisson, hence
for any holomorphic function $g$ there exists $h\in\OO_X$ such that
$$
\{f,g\}_\sigma = f h.
$$
On the other hand,
$$
\sigma^\# \left(\frac{df}{f}\right)(g)
=
\sigma \left(\frac{df}{f},dg\right)
=
\frac{1}{f}\{f,g\}_\sigma
=
h,
$$
which is holomorphic. Therefore $\sigma^\#(df/f)$ defines a holomorphic vector field on the chart.
Moreover, since $(f)$ is preserved, this vector field is tangent to $D_i$, and since this holds for each local branch,
it is tangent to $D$; hence it lies in $T_X(-\log D)$.
For $i>r$ there is no pole to begin with, so $\sigma^\#(dz_i)$ is holomorphic.

From now on, whenever we choose local coordinates $(z_1,\ldots,z_n)$ with $D=\{z_1\cdots z_r=0\}$, we implicitly
work in an SNC chart and use the logarithmic Hamiltonians $X_{\log z_i}=\sigma^\#(dz_i/z_i)$ along the components of $D$.

\section{Logarithmic Poisson connections}\label{sec:log-poisson-connections}
\begin{lemma}\label{lem:log-cotangent-lie}
Assume \textup{(H3)}. Then the Koszul bracket $[-,-]_\sigma$ preserves logarithmic forms:
\[
[\Omega_X^1(\log D),\,\Omega_X^1(\log D)]_\sigma \subset \Omega_X^1(\log D).
\]
Equivalently, $A_\sigma(\log D):=\Omega_X^1(\log D)$ is a holomorphic Lie algebroid with anchor
$\rho=\sigma^\#:\Omega_X^1(\log D)\to T_X(-\log D)$.
\end{lemma}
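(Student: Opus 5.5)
The statement is local, so I will work in an SNC chart $(z_1,\dots,z_n)$ with $D=\{z_1\cdots z_r=0\}$. Write $e_i=dz_i/z_i$ for $i\le r$ and $e_i=dz_i$ for $i>r$; these form a local $\OO_X$-frame of $\Omega^1_X(\log D)$. The Koszul bracket is defined on meromorphic forms by the same formula, so $[\alpha,\beta]_\sigma$ makes sense a priori as a meromorphic $1$-form with poles along $D$. I only need to show it is logarithmic when $\alpha,\beta$ are.

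The first step is to check the anchor. By the computation in \S\ref{lem:logHam}, hypothesis (H3) gives $\sigma^\#(e_i)\in T_X(-\log D)$ for all $i$. By $\OO_X$-linearity this yields $\sigma^\#(\Omega^1_X(\log D))\subset T_X(-\log D)$, which is the anchor statement.

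The second step reduces the bracket to frame elements. For $\alpha=\sum f_ie_i$ and $\beta=\sum g_je_j$, the Leibniz rules for the Koszul bracket give
\[
[f\alpha,g\beta]_\sigma=fg[\alpha,\beta]_\sigma+f\,\sigma^\#(\alpha)(g)\,\beta-g\,\sigma^\#(\beta)(f)\,\alpha .
\]
The last two terms are holomorphic multiples of log forms, because $\sigma^\#(\alpha)(g)$ is holomorphic by the first step. So it suffices to treat $[e_i,e_j]_\sigma$.

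The third step computes the frame brackets. Each $e_i$ is closed, being $d\log z_i$ or $dz_i$. For closed $\alpha,\beta$, Cartan's formula gives $\mathcal{L}_{\sigma^\#\alpha}\beta=d(\iota_{\sigma^\#\alpha}\beta)=d\,\sigma(\beta,\alpha)$, and similarly for the other Lie derivative. Hence
\[
[\alpha,\beta]_\sigma=d\,\sigma(\beta,\alpha)-d\,\sigma(\alpha,\beta)-d\,\sigma(\alpha,\beta)=d\big(\sigma(\alpha,\beta)\big),
\]
up to the sign convention. Set $\pi_{ij}:=\sigma(e_i,e_j)=\langle e_j,\sigma^\#(e_i)\rangle$. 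This is the pairing of a log vector field with a log form, so it is a holomorphic function; concretely, for $i,j\le r$ it equals $\{z_i,z_j\}/(z_iz_j)$, which is holomorphic by (H3) applied twice. Then $[e_i,e_j]_\sigma=\pm d\pi_{ij}$ is a holomorphic exact form, and $\Omega^1_X\subset\Omega^1_X(\log D)$.

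The Lie algebroid axioms (Jacobi identity, Leibniz rule, and the anchor being a bracket homomorphism) hold on the dense open set $X\setminus D$, where $\Omega^1_X(\log D)=\Omega^1_X$ and $A_\sigma$ is a Lie algebroid. They therefore extend by continuity, or by the identity principle, to the locally free sheaf $\Omega^1_X(\log D)$.

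The main obstacle is the holomorphy of $\pi_{ij}$ when both $i,j\le r$. There one needs $\{z_i,z_j\}\in(z_i)\cap(z_j)=(z_iz_j)$. This uses that $z_i,z_j$ are part of a coordinate system, so the ideal intersection is the product. Everything else is formal.
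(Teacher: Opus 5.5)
Your proof is correct, but it is organized differently from the paper's. The paper does not pass to a frame. For arbitrary logarithmic $\alpha,\beta$ it checks the three terms of the Koszul bracket one at a time. The two Lie-derivative terms are logarithmic because $\sigma^\#(\alpha),\sigma^\#(\beta)\in T_X(-\log D)$ and Lie derivatives along logarithmic vector fields preserve $\Omega^1_X(\log D)$. The last term is holomorphic because $\sigma(\alpha,\beta)$ is.

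You instead use the Leibniz rules of the Koszul bracket to reduce to the closed frame $e_i$. There Cartan's formula collapses the bracket to the exact form $d\pi_{ij}$. This buys you three things:
\begin{itemize}
\item You avoid the Lie-derivative preservation fact, which the paper uses without proof. Its proof is essentially your computation, $\mathcal L_V(dz_i/z_i)=d\big(V(z_i)/z_i\big)$.
\item You get explicit structure functions for $A_\sigma(\log D)$.
\item Your density/identity-principle argument justifies the ``equivalently'' clause about the Lie algebroid axioms, which the paper leaves implicit.
\end{itemize}
Your reason for the holomorphy of $\pi_{ij}$ is also the precise one: a logarithmic vector field paired with a logarithmic form. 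The paper only says that $\sigma^\#(\Omega^1_X(\log D))$ is holomorphic, which by itself would not suffice.

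One small slip, which does not affect the conclusion. With the paper's convention $\sigma^\#(\alpha)=\sigma(\alpha,\cdot)$ one has $\iota_{\sigma^\#\alpha}\beta=\sigma(\alpha,\beta)$. For closed $\alpha,\beta$ the two Lie-derivative terms are therefore $d\sigma(\alpha,\beta)$ and $-d\sigma(\beta,\alpha)=d\sigma(\alpha,\beta)$, so $[\alpha,\beta]_\sigma=d(\sigma(\alpha,\beta))$ exactly. Your displayed sum $d\sigma(\beta,\alpha)-d\sigma(\alpha,\beta)-d\sigma(\alpha,\beta)$ actually equals $-3\,d\sigma(\alpha,\beta)$, not $d\sigma(\alpha,\beta)$. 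Since any constant multiple of $d\pi_{ij}$ is holomorphic, the argument goes through unchanged.
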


\begin{proof}
Work in an SNC chart $(z_1,\dots,z_n)$ with $D=\{z_1\cdots z_r=0\}$.
It suffices to check the generators $dz_i/z_i$ ($i\le r$) and $dz_j$ ($j>r$).
By \ref{lem:logHam}, the vector fields $X_i:=\sigma^\#(dz_i/z_i)$ lie in $T_X(-\log D)$.
Hence $\mathcal L_{X_i}$ preserves $\Omega_X^1(\log D)$.
Now, compute the Koszul bracket
\[
[\alpha,\beta]_\sigma=\mathcal L_{\sigma^\#(\alpha)}\beta-\mathcal L_{\sigma^\#(\beta)}\alpha-d(\sigma(\alpha,\beta)).
\]
The first two Lie-derivative terms are logarithmic because $\sigma^\#(\alpha)$ and $\sigma^\#(\beta)$ are logarithmic
vector fields and Lie derivatives by logarithmic vector fields preserve $\Omega_X^1(\log D)$.
For the last term, $\sigma(\alpha,\beta)$ is a holomorphic function on the chart (since $\sigma^\#(\Omega^1(\log D))$
is holomorphic and $\alpha,\beta$ have at most logarithmic poles), so $d(\sigma(\alpha,\beta))$ is holomorphic, hence
in $\Omega_X^1\subset\Omega_X^1(\log D)$.
Therefore $[\alpha,\beta]_\sigma$ is logarithmic.
\end{proof}

We now define the central object of study: a \emph{representation} of the logarithmic cotangent
Poisson Lie algebroid. Concretely, we consider
$$
A_\sigma(\log D):=\Omega_X^1(\log D),
$$
equipped with the Koszul bracket $[-,-]_\sigma$ and the anchor map
$$
\rho:=\sigma^\#:\Omega_X^1(\log D)\longrightarrow T_X(-\log D),
$$
which lands in $T_X(-\log D)$ by hypothesis (H3).
We set
$$
\fol_\sigma^{\log}
:=
\im \big(\sigma^\#:\Omega_X^1(\log D)\to T_X(-\log D)\big)
\ \subset\ T_X(-\log D).
$$
Over $X\setminus D$ one has $\Omega_X^1(\log D)=\Omega_X^1$, hence $\fol_\sigma^{\log}=\fol_\sigma$ there.
We write $\mathfrak X_\sigma^{\log}(U):= H^0(U,\fol_\sigma^{\log})$ for the holomorphic logarithmic Hamiltonian
vector fields on $U$, and $\mathfrak X_\sigma(U):= H^0(U\setminus D,\fol_\sigma)$ on $U\setminus D$.

\subsection{Definition and basic properties}

Let $E\to X$ be a holomorphic vector bundle of rank $e$, and write $\OO_X(E)$ for its sheaf of holomorphic sections.
Let $d_{A_\sigma}$ denote the Chevalley--Eilenberg differential of the Lie algebroid $A_\sigma(\log D)$.
Since $A_\sigma(\log D)^\vee\simeq T_X(-\log D)$, the Lie algebroid cochain complex is
$$
 H^0\big(X,\wedge^\bullet A_\sigma(\log D)^\vee\big)
\;=\;
 H^0\big(X,\wedge^\bullet T_X(-\log D)\big).
$$
In particular, on functions $f\in\OO_X$ we obtain a section $d_{A_\sigma}f\in H^0\big(X,T_X(-\log D)\big)$
characterized by
$$
\alpha\big(d_{A_\sigma}f\big)=\rho(\alpha)(f)
\qquad\text{for all }\alpha\in\Omega_X^1(\log D).
$$
Equivalently, if we set $X_f:=\sigma^\#(df)$ (the Hamiltonian vector field), then
\begin{equation}\label{eq:dA-on-f}
d_{A_\sigma}f \;=\; X_f \;=\; \sigma^\#(df).
\end{equation}
To simplify notation, we write $\delta:=d_{A_\sigma}$ for the Chevalley--Eilenberg differential of the logarithmic cotangent Poisson Lie algebroid
$A_\sigma(\log D)=\Omega_X^1(\log D)$. Under the standard identification
$A_\sigma(\log D)^\vee\simeq T_X(-\log D)$, this is the same differential induced by the Poisson/Lichnerowicz differential $[\sigma,-]$ on polyvector fields.
In particular, for $f\in\OO_X$ we fix the convention
\begin{equation}\label{eq:delta-conv}
\delta(f):=\sigma^\#(df)=:X_f\in T_X(-\log D).
\end{equation}
When $\delta$ is applied to a matrix of functions, it is understood entrywise.

\begin{definition}[Logarithmic Poisson connection]\label{def:logPois}
A \emph{logarithmic Poisson connection tangent to the symplectic foliation} is a $\C$-linear morphism of sheaves
\[
\begin{aligned}
\nabla^{\mathrm{Pois}}:\OO_X(E)&\longrightarrow \fol_\sigma^{\log}\otimes_{\OO_X}\OO_X(E) \subset T_X(-\log D)\otimes_{\OO_X}\OO_X(E) 
\end{aligned}
\]
satisfying the Leibniz rule
\begin{equation}\label{eq:Leibniz}
\nabla^{\mathrm{Pois}}(f s)
\;=\;
\delta(f)\otimes s \;+\; f\,\nabla^{\mathrm{Pois}}(s)
\end{equation}
for all local holomorphic functions $f$ and local holomorphic sections $s$ of $E$.
\end{definition}
Given $\nabla^{\mathrm{Pois}}$ one may define a $\C$-bilinear bracket
$$
\{\,\cdot\,,\,\cdot\,\}:\OO_X\times \OO_X(E)\to \OO_X(E),
\qquad
\{f,s\}:=\big(\nabla^{\mathrm{Pois}}(s)\big)(df),
$$
i.e.\ contraction of the $T_X(-\log D)$-valued section $\nabla^{\mathrm{Pois}}(s)$ with the $1$-form $df\in\Omega_X^1\subset\Omega_X^1(\log D)$.
This bracket records the action of the connection on ordinary exact forms. In the logarithmic setting, however, the operators attached to the logarithmic generators $dz_i/z_i$ are part of the connection data and are not determined by holomorphic functions alone. Thus the residue terms below are genuine logarithmic Lie-algebroid data. Flatness of $\nabla^{\mathrm{Pois}}$ is equivalently the vanishing of the Lie-algebroid curvature, and on ordinary exact forms it gives the Jacobi-type identity
$$
\{\{f,g\},s\}=\{f,\{g,s\}\}-\{g,\{f,s\}\}.
$$

Recall that an ordinary logarithmic connection is a morphism $$\nabla^{\log}:E\to\Omega_X^1(\log D)\otimes E$$
satisfying $\nabla^{\log}(fs)=df\otimes s+f\nabla^{\log}(s)$.
A logarithmic Poisson connection is \emph{not} obtained from $\nabla^{\log}$ in general,  because its Leibniz rule is governed by
the Poisson derivation $\delta(f)=\sigma^\#(df)$ rather than $df$.
However, on any open set where the logarithmic anchor
$$\sigma^\#:\Omega_X^1(\log D)\to T_X(-\log D)$$ is an isomorphism (e.g.\ in the log-symplectic/reduced-degeneracy case),
there is a one-to-one correspondence
$$
\nabla^{\mathrm{Pois}}
\longleftrightarrow
\nabla^{\log}:=(\sigma^\#)^{-1}\circ \nabla^{\mathrm{Pois}},
$$
and flatness is preserved under this identification. If $D$ is not reduced, the same mechanism yields a
\emph{meromorphic} flat connection with poles along $D$.

Now, fix an open set $U\subset X$ over which $E$ admits a holomorphic frame $e=(e_1,\dots,e_e)$.
Write
\begin{equation}\label{eq:Theta-loc}
\nabla^{\mathrm{Pois}}(e_a)
\;=\;
\sum_{b=1}^e \Theta_{ba}\otimes e_b,
\end{equation}
where each $\Theta_{ba}$ is a local section of $\fol_\sigma^{\log}|_U$.
Equivalently, $\Theta=(\Theta_{ba})$ is an $e\times e$ matrix with entries in $\fol_\sigma^{\log}$,
 and
$$
\nabla^{\mathrm{Pois}}(e)=\Theta\cdot e.
$$

For a general section $s=\sum_a Y^a e_a$ (with coefficient column vector $Y=(Y^1,\dots,Y^e)^T$),
the Leibniz rule \eqref{eq:Leibniz} gives
\begin{equation}\label{eq:connection-on-section}
\nabla^{\mathrm{Pois}}(s)
=
\big(\delta Y + \Theta\,Y\big)\otimes e,
\end{equation}
where $\delta Y$ denotes the column vector whose $a$-th entry is $\delta(Y^a)\in T_X(-\log D)$.
Thus the horizontality condition $\nabla^{\mathrm{Pois}}(s)=0$ is equivalently
\begin{equation}\label{eq:horizontal}
\delta Y \;=\; -\,\Theta\,Y,
\end{equation}
an equality in $T_X(-\log D)^{\oplus e}$.

\begin{remark}[Gauge transformation]\label{rem:gauge}
Under a change of frame $e'=g\,e$ with $g\in GL_e(\OO_U)$, the connection matrix transforms as
$$
\Theta'=(\delta g)\,g^{-1}+g\,\Theta\,g^{-1},
$$
where $\delta g$ is obtained by applying $\delta$ entrywise to the matrix $g$.
\end{remark}
View $\Theta$ as an element of
$$
 H^0\big(U, A_\sigma(\log D)^\vee\otimes \End(E)\big)
=
 H^0\big(U, T_X(-\log D)\otimes \End(E)\big),
$$
i.e.\ an $\End(E)$-valued $1$-cochain in the Lie algebroid complex of $A_\sigma(\log D)$.
The differential $\delta$ extends to $\End(E)$-valued cochains by the usual Leibniz rule.
Define the wedge--composition product by
$$
(\Theta\wedge\Theta)(\alpha,\beta)
:=\Theta(\alpha)\circ\Theta(\beta)-\Theta(\beta)\circ\Theta(\alpha),
\qquad
\alpha,\beta\in\Omega_X^1(\log D).
$$

\begin{definition}[Poisson curvature / flatness]\label{def:flatness}
The \emph{Poisson curvature} of $\nabla^{\mathrm{Pois}}$ in the trivialization $e$ is the $\End(E)$-valued $2$-cochain
\begin{equation}\label{eq:curvature}
\mathcal{K}^{\mathrm{Pois}}
\;:=\;
\delta\Theta \;+\; \Theta \wedge \Theta
\;\in\;
 H^0\big(U,\wedge^2 T_X(-\log D)\otimes \End(E)\big).
\end{equation}
We say that $\nabla^{\mathrm{Pois}}$ is \emph{Poisson-flat} (or simply \emph{flat}) if
\begin{equation}\label{eq:flat}
\delta\Theta + \Theta\wedge\Theta \;=\; 0.
\end{equation}
\end{definition}

\begin{remark}[Maurer--Cartan form]\label{rem:MC}
Equation \eqref{eq:flat} is the Maurer--Cartan equation in the differential graded algebra
$\big( H^0(U,\wedge^\bullet A_\sigma(\log D)^\vee)\otimes \End(E),\,\delta,\,\wedge\big)$.
The Jacobi identity for the Koszul bracket (equivalently $[\sigma,\sigma]=0$) is what ensures $\delta^2=0$
and hence makes \eqref{eq:curvature} intrinsic and gauge-invariant.
\end{remark}

\section{Residues and Euler--Poisson principal parts}\label{sec:residues}

We work near a point $p\in D\cap X^\circ$ and choose an adapted SNC chart
$U\ni p$ with holomorphic coordinates $(z_1,\dots,z_n)$ such that
$D\cap U=\{z_1\cdots z_r=0\}$ and $D_i\cap U=\{z_i=0\}$ for $1\le i\le r$.
For $1\le i\le r$ set
$$
X_i \ :=\ \sigma^\# \Big(\frac{dz_i}{z_i}\Big)\ \in\ H^0\big(U,\,T_X(-\log D)\big).
$$
By \ref{lem:logHam} these are holomorphic logarithmic vector fields.
Assume \textup{(H3)}. For each $1\le i\le r$, the vector field
$X_i=\sigma^\#(dz_i/z_i)$ is a Poisson vector field, i.e.
\begin{equation}\label{lem:logHam-poisson}
X_i\big|_{V}\ =\ \sigma^\#\big(d(\log z_i)\big)\ =\ \delta(\log z_i).
\end{equation}

\subsection{Euler--Poisson principal parts and residues}

\begin{definition}[Euler--Poisson principal part and residue]\label{def:EP-residue}
Fix commuting constant matrices $A_1,\dots,A_r\in\Mat_{e\times e}(\C)$.
The associated \emph{Euler--Poisson principal part} on $U$ is the $\End(E)$-valued Poisson connection matrix
$$
\Theta_0\ :=\ \sum_{i=1}^r A_i\,X_i
\ =\ \sum_{i=1}^r A_i\,\sigma^\# \Big(\frac{dz_i}{z_i}\Big).
$$
We define the \emph{residue of $\Theta_0$ along $D_i$ (at $p$)} to be
$$
\Res^{\mathrm{Pois}}_{D_i,p}(\Theta_0)\ :=\ A_i.
$$
\end{definition}

\begin{proposition}[Flatness of Euler--Poisson principal parts]\label{prop:EP-flat}
Let
$$
\Theta_0=\sum_{i=1}^r A_iX_i,
\qquad
X_i=\sigma^\# \Big(\frac{dz_i}{z_i}\Big),
$$
with constant matrices $A_i\in\Mat_{e\times e}(\C)$. If the residue tuple commutes, i.e.
$$
[A_i,A_j]=0\qquad\text{for all }i,j,
$$
then $\Theta_0$ is Poisson-flat. Conversely, if the bivectors $X_i\wedge X_j$, $1\le i<j\le r$, are linearly independent outside an analytic subset of codimension at least two, then flatness of $\Theta_0$ forces $[A_i,A_j]=0$ for all $i,j$.
\end{proposition}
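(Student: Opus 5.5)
The plan is to compute the curvature $\mathcal K^{\mathrm{Pois}}(\Theta_0)=\delta\Theta_0+\Theta_0\wedge\Theta_0$ term by term. I treat the two summands separately and then combine them.

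\textbf{The term $\delta\Theta_0$.} Since the $A_i$ are constant, $\delta\Theta_0=\sum_i A_i\,\delta X_i$, where $\delta X_i$ is the Lichnerowicz differential $[\sigma,X_i]$ (up to sign convention) of the vector field $X_i$. By \eqref{lem:logHam-poisson}, on a simply connected open set $V\subset U\setminus D$ we have $X_i=\delta(\log z_i)$ for a branch of the logarithm. Hence $\delta X_i=\delta^2(\log z_i)=0$ on $V$, using $\delta^2=0$ (Remark~\ref{rem:MC}). Because $\delta X_i$ is a holomorphic section of $\wedge^2T_X(-\log D)$ on all of $U$, it vanishes identically by the identity principle. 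Alternatively, one can argue directly: $\mathcal L_{X_i}\sigma=0$, since $X_i$ is Hamiltonian with locally defined Hamiltonian $\log z_i$ off $D$. Thus $\delta\Theta_0=0$.

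\textbf{The term $\Theta_0\wedge\Theta_0$.} For the quadratic term, evaluate on $\alpha,\beta\in\Omega^1_X(\log D)$:
\[
(\Theta_0\wedge\Theta_0)(\alpha,\beta)=\sum_{i,j}\big(A_iA_j\,\alpha(X_i)\beta(X_j)-A_jA_i\,\beta(X_j)\alpha(X_i)\big),
\]
where the second sum is obtained by reindexing. This rearranges to
\[
\sum_{i<j}[A_i,A_j]\,(X_i\wedge X_j)(\alpha,\beta),
\]
with the diagonal terms cancelling. So $\Theta_0\wedge\Theta_0=\sum_{i<j}[A_i,A_j]\otimes X_i\wedge X_j$. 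The care needed here is in the sign and normalization conventions of the wedge–composition product, but any consistent convention yields this expression up to a factor. Combining with the first step, $\mathcal K^{\mathrm{Pois}}(\Theta_0)=\sum_{i<j}[A_i,A_j]\,X_i\wedge X_j$. This vanishes whenever all commutators vanish, which proves the direct implication.

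\textbf{The converse.} Assume flatness, so that $\sum_{i<j}[A_i,A_j]\,X_i\wedge X_j=0$ as a section of $\wedge^2T_X(-\log D)\otimes\End(E)$. Pick a point $q$ outside the exceptional analytic subset (nonempty since it has codimension $\ge 2$) and outside $D$, where the bivectors $X_i\wedge X_j$, $i<j$, are linearly independent in the fibre $\wedge^2T_qX$. Taking each matrix entry separately gives a linear relation $\sum_{i<j}c^{ab}_{ij}\,X_i\wedge X_j|_q=0$ with constant coefficients $c^{ab}_{ij}=([A_i,A_j])_{ab}$. Pointwise independence then forces $c^{ab}_{ij}=0$, that is, $[A_i,A_j]=0$. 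This step is easy once the curvature formula is in hand. The only subtlety is to use pointwise linear independence over $\C$ of the fibre values, which is what the hypothesis provides, and this suffices because the coefficients are constant.
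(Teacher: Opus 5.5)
Your proof is correct and follows the paper's argument. You get $\delta\Theta_0=0$ because each $X_i$ is locally $\delta(\log z_i)$ off $D$, which you justify more carefully than the paper via $\delta^2=0$ and the identity principle; you then compute $\Theta_0\wedge\Theta_0=\sum_{i<j}[A_i,A_j]\,X_i\wedge X_j$ and deduce the converse from independence of the $X_i\wedge X_j$. Your observation that the constant coefficients make independence at a single point sufficient is cleaner than the paper's appeal to holomorphic extension; just note that in your parenthetical it is the complement of the exceptional set, not the set itself, that is nonempty.
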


\begin{proof}
Since each $X_i$ is Poisson (\ref{lem:logHam-poisson}) and each $A_i$ is constant,
we have $\delta\Theta_0=0$. Moreover,
$$
\Theta_0\wedge\Theta_0
=
\sum_{1\le i<j\le r} [A_i,A_j]\; X_i\wedge X_j.
$$
Thus a commuting residue tuple makes the curvature vanish. Conversely, under the stated independence hypothesis, the vanishing of this holomorphic section outside the codimension-two subset forces each coefficient $[A_i,A_j]$ to vanish there, and hence everywhere by holomorphic extension.
\end{proof}

\subsection{Leafwise meridians and monodromy  }

Shrink $U$ so that $U\subset X^\circ$ and set $U^*:=U\setminus D$.
Assume $\nabla^{\mathrm{Pois}}$ is Poisson-flat on $U^*$ and that we have chosen an Euler--Poisson gauge on $U$,
so that its matrix is $\Theta_0=\sum_{i=1}^r A_i X_i$ with commuting constant residues.
Fix $x\in U^*$ and let $L_x$ be the symplectic leaf through $x$.
On $L_x\cap U^*$ the Poisson connection induces an ordinary flat logarithmic connection, and for a small meridian
$\gamma_i$ around $D_i\cap L_x$ one has the classical formula
$$
\rho^{\mon}_{L_x,x}([\gamma_i])\ =\ \exp\big(-2\pi i\,A_i\big),
$$
with the sign matching the convention used in the transport equation.
This shows that, in Euler--Poisson gauge, meridional monodromy is determined by the residue tuple.

\section{Non-resonance and the Poisson--Poincar\'e--Dulac normal form}\label{sec:PPD}

In this section we work locally near a point $p\in D\cap X^\circ$ and restrict to the effectively non-resonant regime.  The local statement is deliberately formulated with a fixed Euler--Poisson principal part.  This is not an additional weakening of the problem, but the regular-singular condition that the logarithmic residue data are already prescribed.  Thus the holomorphic remainder is required to be residue-free, and the induction is carried out with respect to the boundary ideal of the SNC stratum.

Fix an adapted chart $U\ni p$ and a holomorphic frame $e$.  Let $\Theta$ be the local matrix of $\nabla^{\mathrm{Pois}}$ on $U$.  We write
\begin{equation}\label{eq:Theta-split}
\Theta=\Theta_0+V,
\qquad
\Theta_0:=\sum_{i=1}^r A_i^\circ\,\sigma^\#\Big(\frac{dz_i}{z_i}\Big)=\sum_{i=1}^r A_i^\circ X_i,
\end{equation}
where $A_i^\circ\in\Mat_{e\times e}(\C)$ form a commuting residue tuple.  By Proposition~\ref{prop:EP-flat}, $\Theta_0$ is Poisson-flat.

\begin{notation} \label{not:boundary-ideal}
In the chosen SNC chart$
D\cap U=\{z_1\cdots z_r=0\},
$
we set
\[
I_D:=(z_1,\ldots,z_r)\subset\OO_U,
\quad
X_i:=\sigma^\#\Big(\frac{dz_i}{z_i}\Big),\quad 1\le i\le r.
\]
For $\alpha=(\alpha_1,\ldots,\alpha_r)\in\ZZ_{\ge0}^r$ we write
$
z^\alpha:=z_1^{\alpha_1}\cdots z_r^{\alpha_r}$, $
  |\alpha|:=\alpha_1+\cdots+\alpha_r.$
Then
\begin{equation}\label{eq:delta-monomial}
\delta(z^\alpha)=\sigma^\#(d z^\alpha)
=z^\alpha\sum_{i=1}^r \alpha_i X_i.
\end{equation}
The variables $z_{r+1},\ldots,z_n$ are treated as holomorphic transverse parameters in the $I_D$-adic expansion.
\end{notation}

\begin{definition}[Fixed Euler--Poisson principal part]\label{def:fixed-EP-principal-part}
Let $\Theta_0=\sum_i A_i^\circ X_i$ be as in \eqref{eq:Theta-split}.  We say that a logarithmic Poisson connection matrix $\Theta$ has \emph{fixed Euler--Poisson principal part} $\Theta_0$ if
\[
\Theta=\Theta_0+V
\]
and the remainder $V$ is residue-free and positive with respect to the boundary ideal.  Concretely, in the chosen Euler--Poisson chart this means that the component of $V$ in each logarithmic meridional generator $X_i$ has vanishing residue along $D_i$ and that, after the standard local residue-free normalization, one has
\begin{equation}\label{eq:residue-free-positive}
V\in I_D\cdot \Mat_{e\times e}\big(\fol_\sigma^{\log}(U)\big).
\end{equation}
Equivalently, $\Theta$ and $\Theta_0$ have the same Euler--Poisson residues along all local branches of $D$, and the remaining part begins in strictly positive boundary order.  If the generators $X_i$ satisfy relations, the condition is imposed modulo their effective boundary relations, as made precise by the Euler--Poisson quotient introduced below; the residue tuple is then understood only through the induced effective principal part.
\end{definition}

\begin{remark}\label{rem:fixed-principal-part-not-weakening}
Condition \eqref{eq:residue-free-positive} is the Poisson-logarithmic analogue of fixing the polar part of a regular-singular logarithmic connection.  A merely holomorphic remainder need not have zero logarithmic residue; for instance a coefficient of $dz_i/z_i$ may be holomorphic and still have non-zero restriction to $D_i$.  Thus the residue-free condition is the intrinsic meaning of having the same principal part.
\end{remark}

\subsection{Joint generalized eigenspaces and effective non-resonance}

Fix the joint generalized eigenspace decomposition of the commuting residue tuple
$(A_1^\circ,\ldots,A_r^\circ)$:
\[
        \C^e=\bigoplus_{\kappa\in\mathcal I}E_\kappa .
\]
Thus, for every $\kappa\in\mathcal I$ and $1\le i\le r$,
\[
        A_i^\circ|_{E_\kappa}=\lambda_{i,\kappa}\Id_{E_\kappa}+N_{i,\kappa},
\]
where $\lambda_{i,\kappa}\in\C$ and $N_{i,\kappa}$ is nilpotent.  We write
$\lambda_\kappa=(\lambda_{1,\kappa},\ldots,\lambda_{r,\kappa})\in\C^r$.

\begin{definition}\label{def:effective-ep-quotient}
Let
\[
        X_i=\sigma^\#\!\left(\frac{dz_i}{z_i}\right),
        \qquad 1\le i\le r,
\]
be the logarithmic Hamiltonian generators in the adapted SNC chart.  Let
$\mathfrak k_{\sigma,p}\subset\C^r$ be the space of constant linear relations
among their leading boundary classes:
\[
        \mathfrak k_{\sigma,p}
        =
        \left\{
        (c_1,\ldots,c_r)\in\C^r
        \ ;\
        \sum_{i=1}^r c_iX_i
        \equiv
        0
        \pmod {I_D\fol^{\log}_\sigma}
        \right\}.
\]
The effective Euler--Poisson space at $p$ is
$\mathfrak t_{\sigma,p}:=\C^r/\mathfrak k_{\sigma,p}$.  If
$q_{\sigma,p}:\C^r\to\mathfrak t_{\sigma,p}$ is the quotient map and
$\mu=(\mu_1,\ldots,\mu_r)\in\C^r$, set
$[\mu]_\sigma:=q_{\sigma,p}(\mu)$.  Thus $[\mu]_\sigma$ represents the effective
boundary vector $\sum_i\mu_iX_i$, modulo the leading relations among the
$X_i$.
The effective weight of $E_\kappa$ is
\[
        \bar\lambda_\kappa:=[\lambda_\kappa]_\sigma
        \in\mathfrak t_{\sigma,p}.
\]
Equivalently, changing the residue tuple by coefficients lying in
$\mathfrak k_{\sigma,p}$ does not change the Euler--Poisson principal part
$\Theta_0=\sum_iA_i^\circ X_i$.  If the logarithmic Hamiltonians
$X_1,\ldots,X_r$ are independent modulo $I_D\fol_\sigma^{\log}$, then
$\mathfrak t_{\sigma,p}=\C^r$.
\end{definition}

\begin{definition}\label{def:nonres}
The Euler--Poisson principal part $\Theta_0$ is effectively
Poisson--Poincar\'e--Dulac non-resonant at $p$ if, for every
$\kappa\neq\kappa'$ and every non-zero $\alpha\in\ZZ_{\ge0}^r$,
\[
        [\alpha]_\sigma
        +
        \bar\lambda_\kappa
        -
        \bar\lambda_{\kappa'}
        \neq
        0
        \quad\text{in }\mathfrak t_{\sigma,p}.
\]
For the analytic normalization theorem we also require the standard Poincar\'e
lower bound for these non-zero effective vectors, as needed in the majorant
argument.  In the independent-generator case $\mathfrak t_{\sigma,p}=\C^r$, the
condition is
\[
        \alpha+\lambda_\kappa-\lambda_{\kappa'}\neq0
        \quad\text{in }\C^r .
\]
\end{definition}

\subsection{Leafwise logarithmic primitives and the homological equation}

\begin{lemma}\label{lem:leafwise-log-poincare}
Let $U\simeq [0,\varepsilon)^r\times(S^1)^r\times B$ be a local collar chart
of the oriented real blow-up, with angular coordinates
$\theta_1,\ldots,\theta_r$.  Let $\fol$ be a smooth foliation on $U$ such that,
after shrinking $U$, each leaf is of the form
\[
        L\simeq (S^1)^r\times\mathbf D^q,
\]
and the functions $\theta_i$ restrict to the standard circle coordinates on
the $(S^1)^r$-factor.  If $\alpha$ is a smooth leafwise $1$-form with
$d_\fol\alpha=0$, then, after shrinking $U$ if necessary,
\[
        \alpha=d_\fol f+\sum_{i=1}^r c_i\,d\theta_i,
\]
where $f$ is a smooth leafwise function and the $c_i$ are constant along the
leaves.  Moreover,
\[
        c_i|_L
        =
        \frac{1}{2\pi}\int_{\gamma_i}\alpha|_L,
\]
where $\gamma_i\subset L$ is the standard loop in the $i$-th circle factor.
In particular, $\alpha$ is leafwise exact if and only if all these meridional
periods vanish.
\end{lemma}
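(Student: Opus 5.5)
The plan is to reduce to a parametrized de Rham computation on the model leaf $(S^1)^r\times\mathbf D^q$, where the only cohomology is carried by the circle factors. First I would use the hypothesis on the leaves to put the foliation in a normal form. After shrinking $U$, I would choose a smooth leafwise trivialization
\[
\Phi:U\supset W\xrightarrow{\ \sim\ } T\times(S^1)^r\times\mathbf D^q ,
\]
with $T$ a transversal. Each plaque $\{t\}\times(S^1)^r\times\mathbf D^q$ is then a leaf, and the first $r$ coordinates are the $\theta_i$. This is possible because the $\theta_i$ already restrict to circle coordinates on each leaf, so one only needs to complete them by disc coordinates depending smoothly on $t$, for instance via a local flow box for the $\mathbf D^q$-directions. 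In these coordinates a leafwise $1$-form $\alpha$ becomes a smooth family $\alpha_t$ of $1$-forms on $(S^1)^r\times\mathbf D^q$, and the condition $d_\fol\alpha=0$ says that every $\alpha_t$ is closed.

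Next I would define the periods directly by
\[
c_i(t):=\frac1{2\pi}\int_{\gamma_i}\alpha_t .
\]
These are smooth in $t$ by differentiation under the integral. They are constant along leaves because they depend only on $t$, and because $\alpha_t$ is closed they do not depend on which representative of the homology class of $\gamma_i$ in the leaf is used. I then set $\beta:=\alpha-\sum_i c_i\,d\theta_i$. Since $\int_{\gamma_j}d\theta_i=2\pi\delta_{ij}$, each $\beta_t$ is closed with all periods zero. As $H_1((S^1)^r\times\mathbf D^q)$ is generated by the $[\gamma_i]$, each $\beta_t$ is exact.

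The main step, and the one I expect to be the obstacle, is choosing the primitives smoothly in $t$. For this I would write them down explicitly: fix a basepoint section $x_0(t)=(t,0,0)$ and set
\[
f(t,x):=\int_{\tilde\gamma_{t,x}}\beta_t ,
\]
where $\tilde\gamma_{t,x}$ is a canonical path from $x_0(t)$ to $x$ inside the leaf. Concretely, I would go along the coordinate segments in $\theta$, lifted to $[0,2\pi)$ or any fixed lift, and then radially in $\mathbf D^q$. Since $\beta_t$ is closed with zero periods, $f(t,\cdot)$ is independent of the path, so the discontinuity of the lift causes no problem. By construction $d_\fol f=\beta$. Smoothness of $f$ in $(t,x)$ follows by writing it as a sum of integrals, over fixed parameter intervals, of smooth integrands, where the lifted angles can be chosen locally and smoothly thanks to path independence. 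Alternatively, one can apply a homotopy operator (the Poincaré lemma on the disc plus Fourier averaging on the torus) that depends smoothly on parameters.

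Finally, the period formula holds by construction. Exactness is equivalent to the vanishing of all the $c_i$: one direction is the decomposition just obtained, and the other is Stokes on the closed loops $\gamma_i$, since $\int_{\gamma_i}d_\fol f=0$ forces $c_i=0$.
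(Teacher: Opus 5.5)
Your proposal is correct and follows essentially the same route as the paper: define $c_i$ by the periods over the $\gamma_i$, subtract $\sum_i c_i\,d\theta_i$, and conclude leafwise exactness because $H_1$ of the model leaf $(S^1)^r\times\mathbf D^q$ is generated by the $[\gamma_i]$. The one difference is that you make explicit, by path integration from a basepoint section in a product chart $T\times(S^1)^r\times\mathbf D^q$, the smooth dependence of the primitive on the transverse parameters, which the paper asserts in a single line; both arguments tacitly rely on the foliation being such a product after shrinking.
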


\begin{proof}
On each leaf $L\simeq(S^1)^r\times\mathbf D^q$, the classes of
$d\theta_1,\ldots,d\theta_r$ form a basis of $H^1(L;\mathbb R)$.  Define $c_i$
by the period formula above.  Then $\alpha-\sum_i c_i\,d\theta_i$ has zero
periods on all generators of $H_1(L;\mathbb Z)$, hence is exact on $L$.  After
shrinking the collar, the primitives may be chosen smoothly in the transverse
parameters.  The last assertion follows from the same period description.
\end{proof}

\begin{lemma}\label{lem:holomorphic-residue-free-primitive}
Let $B\subset\C^m$ be a polydisc, let $D_B=\{z_1\cdots z_r=0\}$, and set
$I_D=(z_1,\ldots,z_r)$.  Let
\[
        \eta
        =
        \sum_{i=1}^r a_i(z)\frac{dz_i}{z_i}
        +
        \sum_{j=r+1}^m b_j(z)\,dz_j,
        \qquad
        a_i,b_j\in\OO(B),
\]
be a closed holomorphic logarithmic $1$-form on $B\setminus D_B$.  If
$a_i|_{z_i=0}=0$ for $1\le i\le r$, then $\eta=df$ for some $f\in\OO(B)$.
If, moreover, all coefficients $a_i,b_j$ lie in $I_D^N$, then $f$ may be
chosen in $I_D^N$.
\end{lemma}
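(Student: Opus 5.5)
The plan is to first show that the residue-free hypothesis makes $\eta$ an honest holomorphic $1$-form on all of $B$, then apply the holomorphic Poincaré lemma on the polydisc, and finally handle the $I_D^N$ refinement with a homotopy formula that contracts only in the boundary directions $z'=(z_1,\dots,z_r)$, keeping the transverse variables $z''=(z_{r+1},\dots,z_m)$ fixed.

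\emph{Step 1: removing the poles.} Since $a_i|_{z_i=0}=0$ and $a_i\in\OO(B)$, the Weierstrass division (or simply Hadamard's lemma in the variable $z_i$) gives $a_i=z_i\,a_i'$ with $a_i'\in\OO(B)$. Hence
\[
\eta=\sum_{i=1}^r a_i'(z)\,dz_i+\sum_{j=r+1}^m b_j(z)\,dz_j
\]
is a holomorphic $1$-form on $B$. The form $d\eta$ is a holomorphic $2$-form on $B$ which vanishes on the dense open set $B\setminus D_B$. By continuity (identity principle) it vanishes on $B$, so $\eta$ is closed on $B$.

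\emph{Step 2: exactness.} The polydisc $B$ is convex, so the holomorphic Poincaré lemma applies. Concretely, $f(z)=\int_0^1 \eta_{tz}(z)\,dt$ is holomorphic and satisfies $df=\eta$. This proves the first assertion.

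\emph{Step 3: the refinement with $N\ge1$.} The case $N=0$ is Step 2. For $N\ge 1$ I would use the fibrewise homotopy formula relative to the stratum $\{z'=0\}$. Let $R=\sum_{i\le r} z_i\partial_{z_i}$ and consider the map $(z',z'')\mapsto(tz',z'')$. For a closed form one has
\[
f(z',z'')=f(0,z'')+\int_0^1 \frac{1}{t}\,\big(\iota_R\eta\big)(tz',z'')\,dt
=f(0,z'')+\int_0^1 \sum_{i=1}^r \frac{a_i(tz',z'')}{t}\,dt .
\]

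\emph{Step 4: the two terms of this formula.} The restriction of $\eta$ to $\{z'=0\}$ is $\sum_j b_j(0,z'')\,dz_j$. This vanishes because $b_j\in I_D^N$ with $N\ge1$. So $f(0,z'')$ is constant, and we normalize it to $0$.

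For the integral term, write $a_i=\sum_{|\beta|=N}z'^{\beta}g_{i,\beta}(z)$ with $g_{i,\beta}\in\OO(B)$. Then
\[
\frac{a_i(tz',z'')}{t}=t^{N-1}\sum_{|\beta|=N}z'^{\beta}\,g_{i,\beta}(tz',z'').
\]
This is smooth in $t\in[0,1]$, and integrating gives $\sum_\beta z'^\beta G_{i,\beta}$, where $G_{i,\beta}$ is holomorphic on $B$ (the polydisc is stable under $z'\mapsto tz'$). Hence $f\in I_D^N$.

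\emph{Main obstacle.} The step needing care is this last ideal-membership argument. One must make sure the contraction is taken only in the $z'$-directions, and that the transverse contribution $f(0,z'')$ is killed by the hypothesis on the $b_j$. A full radial contraction in all variables would not preserve $I_D^N$.
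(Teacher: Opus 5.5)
Your proof is correct and is essentially the paper's argument. You divide out $z_i$ so that $\eta$ extends to a closed holomorphic $1$-form on $B$, apply the holomorphic Poincar\'e lemma, and then use a homotopy formula to control the $I_D$-order. The one difference is in the refinement step. The paper invokes the full straight-line homotopy, whereas you contract only in the $z'$-directions and then remove $f(0,z'')$ using $b_j(0,z'')=0$. Your version has the small bonus of expressing $f$, up to that constant, purely in terms of the residue-free coefficients $a_i$.

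Your closing remark, however, is wrong. Write $\eta=\sum_k \eta_k\,dz_k$ on $B$, so $\eta_i=a_i/z_i$ for $i\le r$ and $\eta_j=b_j$ for $j>r$, and expand $a_i=\sum_{|\beta|=N}z'^{\beta}g_{i,\beta}$ and $b_j=\sum_{|\beta|=N}z'^{\beta}h_{j,\beta}$. Then the full radial primitive $f(z)=\int_0^1\sum_k z_k\,\eta_k(tz)\,dt$, centred at a point of $\{z'=0\}$ such as the origin, also lies in $I_D^N$, because
\[
z_i\,\eta_i(tz)=t^{-1}a_i(tz)=t^{N-1}\sum_{|\beta|=N}z'^{\beta}g_{i,\beta}(tz),
\qquad
z_j\,b_j(tz)=t^{N}z_j\sum_{|\beta|=N}z'^{\beta}h_{j,\beta}(tz).
\]
This works because the generators $z'^{\beta}$ of $I_D^N$ are homogeneous under the full scaling $z\mapsto tz$. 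So the paper's one-line justification is legitimate, and your fibrewise contraction is an equally valid alternative rather than a necessary repair.
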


\begin{proof}
The condition $a_i|_{z_i=0}=0$ gives $a_i=z_i a_i'$, with $a_i'\in\OO(B)$.
Hence $a_i(z)dz_i/z_i=a_i'(z)\,dz_i$ is holomorphic.  Thus $\eta$ extends to a
holomorphic closed $1$-form on the whole polydisc.  The holomorphic
Poincar\'e lemma gives a holomorphic primitive.  The straight-line homotopy
formula preserves the $I_D^N$-order, so the primitive can be chosen in
$I_D^N$.
\end{proof}

\begin{lemma}\label{lem:residue-free-EP-acyclicity}
Let $L:=\delta+[\Theta_0,-]$, where $\Theta_0=\sum_iA_i^\circ X_i$, and set
$Q_N:=I_D^N/I_D^{N+1}$.  Let
$\eta\in Q_N\otimes\Mat_{e\times e}(\fol_\sigma^{\log})$ satisfy
\[
        L\eta=0
        \quad\text{in }Q_N\otimes\wedge^2\fol_\sigma^{\log},
        \qquad
        \Res_{D_i}(\eta)=0
        \quad(1\le i\le r).
\]
Then, after shrinking the adapted chart if necessary, there exists
$K\in\Mat_{e\times e}(Q_N)$ such that $LK=-\eta$.
\end{lemma}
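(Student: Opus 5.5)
The plan is to diagonalize $L$ on the graded piece $Q_N$ and then treat the two kinds of blocks separately.

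\textbf{Reduction to blocks and monomials.} Because $Q_N=I_D^N/I_D^{N+1}$, every entry of $\eta$ can be written as a finite sum $\sum_{|\alpha|=N} z^\alpha\,\eta_\alpha$. The coefficients $\eta_\alpha$ depend holomorphically on the transverse parameters $z_{r+1},\dots,z_n$. I look for $K=\sum_{|\alpha|=N} z^\alpha K_\alpha$ of the same shape. By \eqref{eq:delta-monomial}, modulo $I_D^{N+1}$ one has
\[
L(z^\alpha K_\alpha)\equiv z^\alpha\Big(\sum_{i=1}^r X_i\big(\alpha_i K_\alpha+[A_i^\circ,K_\alpha]\big)+\delta' K_\alpha\Big),
\]
where $\delta'$ denotes the Hamiltonian derivative in the transverse variables. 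Taking the leading boundary classes of the $X_i$ is legitimate exactly because we work modulo $I_D\fol_\sigma^{\log}$, which is the quotient of Definition~\ref{def:effective-ep-quotient}. Next I decompose $\Mat_{e\times e}$ into the blocks $\Hom(E_{\kappa'},E_\kappa)$ coming from the joint generalized eigenspaces. On such a block, $\alpha_i K+[A_i^\circ,K]$ equals $(\alpha_i+\lambda_{i,\kappa}-\lambda_{i,\kappa'})K$ plus a nilpotent operator built from the $N_{i,\kappa}$ and $N_{i,\kappa'}$. Filtering each block by powers of this nilpotent part makes the equation block-triangular, so it is enough to solve the associated graded equation and then correct by finite back-substitution.

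\textbf{Off-diagonal blocks $\kappa\neq\kappa'$.} Set $w_\alpha:=\sum_i(\alpha_i+\lambda_{i,\kappa}-\lambda_{i,\kappa'})X_i$. By Definition~\ref{def:nonres}, $w_\alpha$ is nonzero in $\mathfrak t_{\sigma,p}$, so after shrinking the chart it is a nowhere-vanishing section of the leading quotient. The cocycle condition $L\eta=0$ says, to leading order, that $w_\alpha\wedge\eta_\alpha\equiv -\delta'\eta_\alpha$. The transverse term $\delta'\eta_\alpha$ is handled by a secondary filtration, by degree in the transverse Taylor expansion (or a holomorphic homotopy in those variables). Once that term is absorbed, a division (Koszul) argument applies. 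Since $w_\alpha\neq0$, one can complete it to a local frame of $\fol_\sigma^{\log}$ modulo $I_D$, which is possible because $p\in X^\circ$ and the rank is constant. In that frame, the wedge identity forces $\eta_\alpha=w_\alpha\,k_\alpha$ for a matrix of functions $k_\alpha$, and then $K_\alpha=-k_\alpha$ solves the equation at this order.

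\textbf{Diagonal blocks $\kappa=\kappa'$.} Here $w_\alpha=\sum_i\alpha_i X_i$ is the Hamiltonian differential of $z^\alpha$ and non-resonance is not available. Instead I use the residue-free hypothesis $\Res_{D_i}(\eta)=0$. On the associated graded piece the equation reduces to the scalar equation $\delta K=-\eta$. Along each leaf, $\eta$ corresponds (through $\sigma^\#$, which is an isomorphism leafwise on $X^\circ$) to a closed logarithmic $1$-form whose $dz_i/z_i$-coefficients vanish on $z_i=0$. Lemma~\ref{lem:holomorphic-residue-free-primitive}, applied with $B$ a local transversal to the foliation times the leaf polydisc, then gives a holomorphic primitive in $I_D^N$. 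Its class in $Q_N$ is the desired $K$. The leafwise log Poincar\'e lemma, Lemma~\ref{lem:leafwise-log-poincare}, explains why no meridional periods appear: those periods are precisely the residues, which vanish by hypothesis.

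\textbf{Main obstacle.} I expect the hardest step to be the division step in the off-diagonal case. The difficulty is making ``$w_\alpha\wedge\eta_\alpha$ controlled $\Rightarrow$ $\eta_\alpha$ is a multiple of $w_\alpha$'' rigorous when the $X_i$ satisfy boundary relations and the transverse derivative $\delta'$ couples different transverse orders. My first attempt would be a double induction: on the nilpotent filtration and on the transverse Taylor degree, using at each stage that $w_\alpha$ is nonvanishing modulo $I_D\fol_\sigma^{\log}$. Holomorphy in the transverse parameters would be recovered from uniformity of the frame completion after shrinking $U$.
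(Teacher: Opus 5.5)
Your architecture matches the paper's: blocks $\Hom(E_{\kappa'},E_\kappa)$, the associated graded of a finite nilpotent filtration, the splitting $Q_N=\bigoplus_{|\alpha|=N}z^\alpha\OO_S$, a Koszul argument on the non-resonant pieces, and Lemma~\ref{lem:holomorphic-residue-free-primitive} on the rest. The gap is the off-diagonal step. You leave it open yourself, and the route you sketch for it does not work.

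\emph{The non-vanishing claim.} $w_\alpha\neq0$ in $\mathfrak t_{\sigma,p}$ only says $w_\alpha\notin I_D\fol_\sigma^{\log}$, i.e.\ its restriction to $S$ is not identically zero. It may still vanish at $p$, and shrinking around $p$ cannot remove that zero. Without pointwise non-vanishing there is no frame completion, and the division principle really fails: if $w_\alpha|_S=f\,w'$ with $f(p)=0$ and $w'(p)\neq0$, then $w_\alpha\wedge w'=0$ but $w'\notin\OO\cdot w_\alpha$.

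\emph{The transverse term.} A filtration by transverse Taylor degree cannot absorb $\delta'$. Since $\delta'$ differentiates in $z_{r+1},\dots,z_n$, it lowers degree. So the degree-$d$ part of the equation involves the still undetermined degree-$(d+1)$ part of the unknown, and the induction does not close.

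The paper never divides by $w_\alpha$ and does not isolate a transverse term. On the $(z^\alpha,\kappa,\kappa')$ piece it takes the graded operator to be $d_\chi=\chi_{\alpha,\kappa,\kappa'}\wedge(-)$ on the Koszul complex of the constant space $\mathfrak t_{\sigma,p}$. It then chooses a constant $\ell\in\mathfrak t_{\sigma,p}^\vee$ with $\ell(\chi_{\alpha,\kappa,\kappa'})=1$ and uses $d_\chi\iota_\ell+\iota_\ell d_\chi=\Id$, so the only input is $\chi_{\alpha,\kappa,\kappa'}\neq0$ in $\mathfrak t_{\sigma,p}$. To run your concrete version, you would need the leading classes of $X_1,\dots,X_r$ to realize $\mathfrak t_{\sigma,p}$ fibrewise inside $\fol_\sigma^{\log}|_S$ near $p$. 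That, not shrinking, is what would make $w_\alpha$ nowhere vanishing and let $\ell$ extend to a covector with $\ell(w_\alpha)=1$.

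Note also that the paper's dichotomy is $\chi_{\alpha,\kappa,\kappa'}\neq0$ versus $\chi_{\alpha,\kappa,\kappa'}=0$, not off-diagonal versus diagonal. Resonant off-diagonal pieces are treated by the residue-free primitive exactly like diagonal ones, and the hypothesis $\Res_{D_i}(\eta)=0$ on all blocks is what covers them. Hence Definition~\ref{def:nonres} is not a hypothesis of this lemma. It enters in Lemma~\ref{lem:EP-homological-equation}, to remove the off-diagonal residues before this lemma is applied. As written, your argument proves the lemma only under that additional assumption.
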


\begin{proof}
Decompose the proof according to $\C^e=\bigoplus_\kappa E_\kappa$.  For
$W_{\kappa\kappa'}:=\Hom(E_{\kappa'},E_\kappa)$, put
\[
        \mathcal N_i^{\kappa\kappa'}(M)
        =
        N_{i,\kappa}M-MN_{i,\kappa'} .
\]
Since the $\mathcal N_i^{\kappa\kappa'}$ are commuting nilpotent endomorphisms,
choose a finite filtration
\[
        W_{\kappa\kappa'}=F^0W_{\kappa\kappa'}
        \supset F^1W_{\kappa\kappa'}
        \supset\cdots\supset F^QW_{\kappa\kappa'}=0
\]
such that $\mathcal N_i^{\kappa\kappa'}(F^qW_{\kappa\kappa'})\subset
F^{q+1}W_{\kappa\kappa'}$ for all $i$.  It is enough to prove exactness on the
associated graded.
The graded boundary piece decomposes as
\[
        Q_N=\bigoplus_{|\alpha|=N}z^\alpha\OO_S,
        \qquad
        S=\{z_1=\cdots=z_r=0\}.
\]
On the $z^\alpha$-summand and on the block $(\kappa,\kappa')$, the semisimple
part of $L$ is multiplication, in the Koszul complex of the effective space, by
\[
        \chi_{\alpha,\kappa,\kappa'}
        =
        [\alpha]_\sigma+\bar\lambda_\kappa-\bar\lambda_{\kappa'}
        \in\mathfrak t_{\sigma,p}.
\]
If $\chi_{\alpha,\kappa,\kappa'}\neq0$, choose
$\ell\in\mathfrak t_{\sigma,p}^{\vee}$ with
$\ell(\chi_{\alpha,\kappa,\kappa'})=1$.  For
$d_\chi:=\chi_{\alpha,\kappa,\kappa'}\wedge(-)$, contraction
$h_\ell:=\iota_\ell$ satisfies
\[
        d_\chi h_\ell+h_\ell d_\chi=\Id .
\]
Hence the associated graded complex is exact in degree $1$.
If $\chi_{\alpha,\kappa,\kappa'}=0$, the associated graded equation is
$\delta K=-\eta$.  The equation $L\eta=0$ gives closedness of the corresponding
logarithmic leafwise $1$-form, and $\Res_{D_i}(\eta)=0$ gives zero logarithmic
residues.  Lemma~\ref{lem:holomorphic-residue-free-primitive} therefore gives
a holomorphic primitive of the same $I_D$-order.
Thus the associated graded complex is exact in degree $1$ in both cases.  The
finite filtration above lifts the primitive from the associated graded to the
original filtered complex.  Hence $LK=-\eta$ for some
$K\in\Mat_{e\times e}(Q_N)$.
\end{proof}

\begin{lemma}\label{lem:EP-homological-equation}
Let $R^{(N)}$ be the order $N$ defect term produced in the $I_D$-adic
normalization of a flat logarithmic Poisson connection with fixed
Euler--Poisson principal part $\Theta_0$.  Suppose that $\Theta_0$ satisfies
Definition~\ref{def:nonres}.  Then, after shrinking the adapted chart if
necessary, there exists $K^{(N)}\in\Mat_{e\times e}(I_D^N)$ such that
\[
        \delta K^{(N)}+[\Theta_0,K^{(N)}]
        \equiv
        -R^{(N)}
        \pmod {I_D^{N+1}} .
\]
\end{lemma}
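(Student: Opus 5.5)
The plan is to reduce the statement to Lemma~\ref{lem:residue-free-EP-acyclicity} by checking that the order-$N$ defect $R^{(N)}$ satisfies its two hypotheses: closedness for $L=\delta+[\Theta_0,-]$ modulo $I_D^{N+1}$, and vanishing logarithmic residues. First I make the inductive setup precise. Suppose that after $N-1$ gauge steps the connection matrix has the form $\Theta^{(N)}=\Theta_0+V^{(N)}$ with $V^{(N)}\in I_D^N\cdot\Mat_{e\times e}(\fol_\sigma^{\log}(U))$. The case $N=1$ is exactly \eqref{eq:residue-free-positive} from Definition~\ref{def:fixed-EP-principal-part}. Define $R^{(N)}$ as the class of $V^{(N)}$ in $Q_N\otimes\Mat_{e\times e}(\fol_\sigma^{\log})$, where $Q_N=I_D^N/I_D^{N+1}$.

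For closedness, I expand the flatness equation \eqref{eq:flat} for $\Theta^{(N)}$. Since $\Theta_0$ is flat by Proposition~\ref{prop:EP-flat}, it gives
\[
\delta V^{(N)}+[\Theta_0,V^{(N)}]+V^{(N)}\wedge V^{(N)}=0 .
\]
The quadratic term lies in $I_D^{2N}\subset I_D^{N+1}$ because $N\ge1$. The operator $\delta$ preserves $I_D^N$: by \eqref{eq:delta-monomial} we have $\delta(z^\alpha)\in z^\alpha\cdot\fol_\sigma^{\log}$, and $\delta$ of a transverse coefficient is holomorphic, so the Leibniz rule keeps the $I_D$-order. Hence $LR^{(N)}=0$ in $Q_N\otimes\wedge^2\fol_\sigma^{\log}$.

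For the residues, a section of $I_D^N\fol_\sigma^{\log}$ with $N\ge1$ has its coefficients on the generators $X_i$ in $I_D$. Restricting to $D_i$ therefore kills the $z_i$-divisible part, and the remaining contributions from the other $z_j$ vanish on the deepest stratum. So $\Res_{D_i}(R^{(N)})=0$ in the sense used in Lemma~\ref{lem:residue-free-EP-acyclicity}, understood as in Definition~\ref{def:fixed-EP-principal-part} modulo the effective relations $\mathfrak k_{\sigma,p}$. The effective non-resonance of Definition~\ref{def:nonres} is exactly the hypothesis under which that lemma's case analysis, split by whether $\chi_{\alpha,\kappa,\kappa'}$ vanishes, runs. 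Applying the lemma then gives $K\in\Mat_{e\times e}(Q_N)$ with $LK=-R^{(N)}$. I lift $K$ to $K^{(N)}\in\Mat_{e\times e}(I_D^N)$ by choosing holomorphic representatives of the coefficients of the $z^\alpha$ with $|\alpha|=N$, which yields the stated congruence modulo $I_D^{N+1}$.

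The main obstacle is the residue-free condition at the resonant blocks, where $\chi=0$. There the graded equation is $\delta K=-\eta$, and Lemma~\ref{lem:holomorphic-residue-free-primitive} needs the coefficients of the $dz_i/z_i$ to vanish on $z_i=0$. When the $X_i$ satisfy relations, a nonzero coefficient on a $X_i$ can be hidden modulo $I_D\fol^{\log}_\sigma$. I would handle this by replacing $R^{(N)}$ by a representative in which every component along $\mathfrak k_{\sigma,p}$ is absorbed into $I_D^{N+1}$. This is possible because such combinations are by definition $\equiv0 \pmod{I_D\fol_\sigma^{\log}}$, and multiplying by an element of $I_D^N$ raises their order to $N+1$. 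I would also check that this change of representative is compatible with the nilpotent filtration used in the proof of Lemma~\ref{lem:residue-free-EP-acyclicity}.
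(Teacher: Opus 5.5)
There is a genuine gap: your argument that $R^{(N)}$ is residue-free does not work. Your closedness computation and the final lifting step match the paper; the problem is the residue hypothesis of Lemma~\ref{lem:residue-free-EP-acyclicity}.

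That hypothesis enters through Lemma~\ref{lem:holomorphic-residue-free-primitive}. It needs the $X_i$-coefficient to vanish on all of $D_i=\{z_i=0\}$, i.e.\ to be divisible by $z_i$. Vanishing on the deepest stratum $S$ is not enough, and your sentence about the ``other $z_j$'' conflates the two. For $r\ge2$, membership in $I_D^N\fol_\sigma^{\log}$ does not give divisibility by $z_i$: a monomial $z^\alpha$ on $X_i$ with $|\alpha|=N$ and $\alpha_i=0$ survives on $D_i$.

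Such terms genuinely occur in $L$-closed classes. Take $r=2$ with $X_1,X_2$ part of a local frame of $\fol_\sigma^{\log}$, $A_1^\circ=\mathrm{diag}(\tfrac12,0)$ and $A_2^\circ=0$ (effectively non-resonant), and let $E_{12}$ be the elementary matrix. By \eqref{eq:delta-monomial},
\[
\eta:=L(z_2E_{12})=z_2\bigl(\tfrac12X_1+X_2\bigr)E_{12}.
\]
This class is $L$-closed (as $L^2=0$) and lies in $I_D\fol_\sigma^{\log}$, yet its $X_1$-coefficient $\tfrac12z_2E_{12}$ does not vanish on $D_1$. So the order count alone cannot make $R^{(N)}$ residue-free in off-diagonal blocks. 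Residue-freeness would have to be tracked through the gauge steps, which you do not do.

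The paper avoids this with a step that is missing from your proposal, and this step is exactly where Definition~\ref{def:nonres} is used. On each $(\kappa,\kappa')$-block with $\kappa\neq\kappa'$ one has $\chi_{\alpha,\kappa,\kappa'}\neq0$. The Koszul contraction $\iota_\ell$, combined with the nilpotent filtration, therefore solves the off-diagonal part directly, with no residue condition, and produces $K^{(N)}_{\mathrm{mer}}$. Only the corrected class $R^{(N)}_{\mathrm{rf}}=R^{(N)}+L(K^{(N)}_{\mathrm{mer}})$ is fed into Lemma~\ref{lem:residue-free-EP-acyclicity}. Its off-diagonal residues vanish by construction, and its diagonal residues vanish by the fixed-principal-part hypothesis.

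In your argument, non-resonance is invoked only verbally. Lemma~\ref{lem:residue-free-EP-acyclicity} carries no such hypothesis, so your chain never actually uses it. Your last paragraph is right that residues matter only where $\chi_{\alpha,\kappa,\kappa'}=0$. The missing observation is that Definition~\ref{def:nonres} confines those blocks to the diagonal, where the fixed principal part supplies the vanishing. Your proposed absorption of $\mathfrak k_{\sigma,p}$-components does not address the $z_j$-terms on $X_i$ at all.
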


\begin{proof}
Set $L:=\delta+[\Theta_0,-]$.  Since $\Theta_0$ is flat, $L^2=0$.  The order
$N$ part of the Maurer--Cartan equation gives
\[
        L(R^{(N)})\equiv0\pmod {I_D^{N+1}} .
\]
It is enough to solve the equation in $Q_N:=I_D^N/I_D^{N+1}$, with
\[
        Q_N=\bigoplus_{|\alpha|=N}z^\alpha\OO_S,
        \qquad
        S=\{z_1=\cdots=z_r=0\}.
\]

Consider the $(\kappa,\kappa')$-block with $\kappa\neq\kappa'$.  On the
$z^\alpha$-summand the effective vector is
\[
        \chi_{\alpha,\kappa,\kappa'}
        =
        [\alpha]_\sigma+\bar\lambda_\kappa-\bar\lambda_{\kappa'}.
\]
Definition~\ref{def:nonres} gives $\chi_{\alpha,\kappa,\kappa'}\neq0$.  Choose
$\ell\in\mathfrak t_{\sigma,p}^{\vee}$ such that
$\ell(\chi_{\alpha,\kappa,\kappa'})=1$.  The Koszul homotopy
\[
        \bigl[\chi_{\alpha,\kappa,\kappa'}\wedge(-)\bigr]\iota_\ell
        +
        \iota_\ell\bigl[\chi_{\alpha,\kappa,\kappa'}\wedge(-)\bigr]
        =
        \Id
\]
solves the semisimple off-diagonal meridional equation.  The nilpotent
contribution is treated by the finite filtration used in the proof of
Lemma~\ref{lem:residue-free-EP-acyclicity}.

Summing these off-diagonal meridional corrections over all $\alpha$ and all
$\kappa\neq\kappa'$, we obtain
$K_{\mathrm{mer}}^{(N)}\in\Mat_{e\times e}(Q_N)$.  Define
\[
        R_{\mathrm{rf}}^{(N)}=R^{(N)}+L(K_{\mathrm{mer}}^{(N)}).
\]
By construction, the off-diagonal logarithmic residues of
$R_{\mathrm{rf}}^{(N)}$ vanish.  Since $L^2=0$, the class
$R_{\mathrm{rf}}^{(N)}$ is still $L$-closed in $Q_N$.  On diagonal blocks, the
fixed-principal-part hypothesis gives vanishing logarithmic residues.  Hence
$R_{\mathrm{rf}}^{(N)}$ is residue-free and $L$-closed.
Lemma~\ref{lem:residue-free-EP-acyclicity} gives
$K_{\mathrm{rf}}^{(N)}\in\Mat_{e\times e}(Q_N)$ such that
$L(K_{\mathrm{rf}}^{(N)})=-R_{\mathrm{rf}}^{(N)}$ in $Q_N$.  Therefore
$K_Q^{(N)}:=K_{\mathrm{mer}}^{(N)}+K_{\mathrm{rf}}^{(N)}$ satisfies
$L(K_Q^{(N)})=-R^{(N)}$ in $Q_N$.  Choose any lift
$K^{(N)}\in\Mat_{e\times e}(I_D^N)$ of $K_Q^{(N)}$.  This lift satisfies the
desired congruence modulo $I_D^{N+1}$.
\end{proof}

\subsection{Poisson--Poincar\'e--Dulac elimination in the effectively non-resonant case}

Under a holomorphic gauge $e\mapsto \widetilde e=He$, the connection matrix transforms as
\[
\widetilde\Theta=(\delta H)H^{-1}+H\Theta H^{-1}.
\]

\begin{theorem} \label{thm:PPD-local}
Let $p\in D\cap X^\circ$, and work in an adapted Euler--Poisson chart as above.  Assume that $\nabla^{\mathrm{Pois}}$ is Poisson-flat, that $\Theta$ has fixed Euler--Poisson principal part $\Theta_0=\sum_i A_i^\circ X_i$ in the sense of Definition~\ref{def:fixed-EP-principal-part}, and that the Euler--Poisson principal part is effectively PPD non-resonant at $p$ in the sense of Definition~\ref{def:nonres}.  Then, after shrinking $U$ around $p$, there exists a holomorphic gauge $H\in\GL_e(\OO_U)$ such that
\begin{equation}\label{eq:PPD-Theta}
\widetilde\Theta=(\delta H)H^{-1}+H\Theta H^{-1}
=\sum_{i=1}^r \widetilde A_i\,\sigma^\#\Big(\frac{dz_i}{z_i}\Big),
\end{equation}
where the matrices $\widetilde A_i\in\Mat_{e\times e}(\C)$ are constant and commuting.  The normal form is written relative to the chosen Euler--Poisson generators $X_i$; if these generators satisfy relations, the residue tuple is understood in the effective quotient $\mathfrak t_{\sigma,p}$.
\end{theorem}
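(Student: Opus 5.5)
The plan is the classical Poincar\'e--Dulac scheme: eliminate the remainder $V$ order by order in the $I_D$-adic filtration, then prove convergence. Write $\Theta=\Theta_0+V$ with $V\in I_D\cdot\Mat_{e\times e}(\fol_\sigma^{\log}(U))$ residue-free, as in Definition~\ref{def:fixed-EP-principal-part}. We seek $H$ as a convergent infinite product $H=\cdots(\Id+K^{(2)})(\Id+K^{(1)})$ with $K^{(N)}\in\Mat_{e\times e}(I_D^N)$, or equivalently as a single series $H=\Id+\sum_{N\ge1}H_N$. The target is $\widetilde\Theta=\Theta_0$, so $\widetilde A_i=A_i^\circ$, understood modulo $\mathfrak k_{\sigma,p}$ in the effective case. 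The theorem's conclusion then follows, since $\Theta_0$ has constant commuting residues and is flat by Proposition~\ref{prop:EP-flat}.

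\textbf{Inductive step.} Suppose that after $N-1$ steps $\Theta^{(N-1)}=\Theta_0+R$ with $R\in I_D^N\Mat(\fol_\sigma^{\log})$, still residue-free. Write $R^{(N)}$ for its class in $Q_N$. Flatness of both $\Theta^{(N-1)}$ and $\Theta_0$ gives
\[
LR+R\wedge R=0,
\]
and $R\wedge R\in I_D^{2N}$. Hence $LR^{(N)}\equiv0$ modulo $I_D^{N+1}$. Lemma~\ref{lem:EP-homological-equation} then supplies $K^{(N)}\in\Mat(I_D^N)$ with $LK^{(N)}\equiv-R^{(N)}$. Apply the gauge $\Id+K^{(N)}$:
\[
(\delta K)(\Id+K)^{-1}+(\Id+K)(\Theta_0+R)(\Id+K)^{-1}=\Theta_0+R+LK+O(I_D^{N+1}),
\]
because the terms quadratic in $K$ or of the form $KR$ lie in $I_D^{N+1}$. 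The new remainder therefore lies in $I_D^{N+1}$.

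\textbf{Residue-freeness is preserved.} The residue along $D_i$ of $\delta K$ is $\delta K$ restricted to the $X_i$-component. By \eqref{eq:delta-monomial} this component carries a factor $z^\alpha\alpha_i$, and for $K\in I_D^N$ it vanishes on $D_i$ for monomials with $\alpha_i>0$. The monomials with $\alpha_i=0$ need a separate check: there the contribution comes from the $\OO_S$-coefficients, whose Hamiltonian derivatives are holomorphic combinations. I expect this bookkeeping to close using the fact that the commutator term $[\Theta_0,K]$ has residue $[A_i^\circ,K|_{D_i}]$, which is exactly what the off-diagonal meridional step of Lemma~\ref{lem:EP-homological-equation} arranges.

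\textbf{Formal normal form.} Iterating gives a formal gauge $\hat H\in\GL_e(\OO_S[[z_1,\dots,z_r]])$ with $\hat H\cdot\Theta=\Theta_0$. Here the transverse variables $z_{r+1},\dots,z_n$ are parameters, and after shrinking they remain holomorphic on a fixed polydisc $S$.

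\textbf{Convergence.} I expect this to be the main obstacle. The first thing I would try is to set up a linear equation for $H$ directly instead of composing infinitely many gauges:
\[
\delta H+\Theta_0H-H\Theta_0=-HV.
\]
Expand $H=\sum_\alpha z^\alpha H_\alpha$ with $H_\alpha\in\Mat(\OO_S)$. On the $z^\alpha$-block $(\kappa,\kappa')$, the semisimple part of $L$ acts by $\chi_{\alpha,\kappa,\kappa'}$. The Poincar\'e lower bound built into Definition~\ref{def:nonres} gives
\[
\|\chi_{\alpha,\kappa,\kappa'}\|\ge c(1+|\alpha|)
\]
in $\mathfrak t_{\sigma,p}$, so the Koszul homotopy $\iota_\ell$ has norm $O(1/|\alpha|)$. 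The nilpotent parts add only a bounded factor, since the filtration has finite length $Q$. For the resonant diagonal pieces, the primitive from Lemma~\ref{lem:holomorphic-residue-free-primitive}, given by the straight-line homotopy, divides the coefficient of $z^\alpha$ by $|\alpha|$, so it obeys the same bound. The leafwise derivatives $\delta$ in the transverse $\OO_S$-directions lose a derivative; I would control this with Cauchy estimates on nested polydiscs, Nagumo style. Because the right side $-HV$ is linear in $H$ and $V\in I_D$, the recursion for $H_\alpha$ involves only $H_\beta$ with $|\beta|<|\alpha|$. A majorant series $h(t)$ satisfying
\[
h\le 1+\tfrac{C}{t}\, h\, v(t)
\]
then converges, exactly as in the regular-singular Fuchsian case treated in \cite{NY02}.

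Finally, $H(p)=\Id$ makes $H$ invertible after shrinking $U$. Flatness guarantees compatibility, meaning the curvature of the output vanishes automatically, so solving only the degree-one equation suffices.
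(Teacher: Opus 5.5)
Your proposal is essentially the paper's own proof. It uses the same $I_D$-adic induction: flatness gives $LR^{(N)}\equiv 0 \pmod{I_D^{N+1}}$, Lemma~\ref{lem:EP-homological-equation} supplies $K^{(N)}\in\Mat_{e\times e}(I_D^N)$, the quadratic terms land in $I_D^{2N}\subset I_D^{N+1}$, and convergence of the formal gauge comes from the Poincar\'e lower-bound clause of Definition~\ref{def:nonres} with holomorphic homotopy operators and Cauchy estimates. The residue-freeness step you leave open is one the paper does not prove either: it just asserts it (``the induction is performed inside the fixed-principal-part subspace'') and builds it into that lemma.

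The only real variation is that you run the majorant argument on a single linear equation for $H$ rather than on the infinite product $\prod_N(I+K^{(N)})$, which is the standard Fuchsian packaging of the same estimate. Note one sign: with the gauge formula as written, $\widetilde\Theta=\Theta_0$ is equivalent to $\delta H=\Theta_0H-H\Theta_0-HV$. So the commutator sign in your linear equation (inherited from the paper's $L=\delta+[\Theta_0,-]$) should be flipped; this is harmless because effective non-resonance is symmetric in $(\kappa,\kappa')$.
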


\begin{proof}
We give the proof in the chosen effective PPD chart.  Since \(\Theta\) has fixed Euler--Poisson principal part, write
\[
\Theta=\Theta_0+V,
\qquad
\Theta_0=\sum_{i=1}^r A_i^\circ X_i,
\qquad
V\in I_D\cdot\Mat_{e\times e}(\fol_\sigma^{\log}(U)).
\]
The matrices \(A_i^\circ\) commute, hence \(\Theta_0\) is Poisson-flat by Proposition~\ref{prop:EP-flat}.  Put
\[
L(K):=\delta K+[\Theta_0,K]
\]
for the linearized homological operator.
We construct a holomorphic gauge
\[
H=I+\sum_{N\ge1}K^{(N)},
\qquad K^{(N)}\in\Mat_{e\times e}(I_D^N),
\]
by induction on the boundary order.  Suppose that \(H^{[N-1]}\) has already been constructed so that the
transformed connection is congruent to \(\Theta_0\) modulo \(I_D^N\), and let
\[
R^{(N)}:=(\delta H^{[N-1]})(H^{[N-1]})^{-1}
       +H^{[N-1]}\Theta(H^{[N-1]})^{-1}-\Theta_0
\]
be its order \(N\) defect.  Then
\(R^{(N)}\in\Mat_{e\times e}(I_D^N\fol_\sigma^{\log})\).  Flatness of both \(\Theta\) and \(\Theta_0\), together
with the induction hypothesis, gives the linearized Maurer--Cartan compatibility condition
\[
        \delta R^{(N)}+[\Theta_0,R^{(N)}]\equiv0
        \pmod {I_D^{N+1}}.
\]
Moreover the induction is performed inside the fixed-principal-part subspace, so the order \(N\) defect is
residue-free in the logarithmic meridional directions.
By Lemma~\ref{lem:EP-homological-equation}, the homological equation
\[
        L(K^{(N)})\equiv -R^{(N)}\pmod{I_D^{N+1}}
\]
has a solution with \(K^{(N)}\in\Mat_{e\times e}(I_D^N)\).  Taking
\[
H^{[N]}=(I+K^{(N)})H^{[N-1]}
\]
therefore improves the normalization by one boundary order.  The quadratic terms in \(K^{(N)}\) lie in
\(I_D^{2N}\subset I_D^{N+1}\) for \(N\ge1\), so no further order \(N\) contribution appears.

This produces a formal gauge.  The lower-bound clause in the effective non-resonance condition gives the
standard Poincar\'e majorant estimate for the off-diagonal homological inverses, while the diagonal primitives
are obtained by the usual holomorphic homotopy operators on polydiscs and preserve the \(I_D\)-order by
Cauchy estimates.  Hence the formal product converges on a smaller polydisc.  The limiting gauge
\(H\in\GL_e(\OO_U)\) transforms \(\Theta\) into a pure Euler--Poisson normal form
\[
        \widetilde\Theta=\sum_{i=1}^r\widetilde A_iX_i,
\]
with constant commuting residues; commutativity follows from flatness of the transformed principal part.
\end{proof}

\subsection{Uniqueness up to Casimir--centralizer gauge}\label{subsec:uniq-casimir}

Assume that the connection is already in pure Euler--Poisson normal form,
\[
        \Theta_{\mathrm{nf}}
        =
        \sum_{i=1}^r \widetilde A_iX_i,
        \qquad
        [\widetilde A_i,\widetilde A_j]=0.
\]
A holomorphic function $f\in\OO_U$ is a Casimir if $\delta f=0$.  A holomorphic
matrix is Casimir-valued if all its entries are Casimirs.

\begin{definition}\label{def:centralizer}
The effective centralizer of the normal form is
\[
        Z_{\mathrm{eff}}(\widetilde A_\bullet)
        :=
        \left\{
        M\in\GL(e,\C)
        \ ;\
        \sum_{i=1}^r [M,\widetilde A_i]\otimes [e_i]_\sigma=0
        \text{ in }\End(\C^e)\otimes\mathfrak t_{\sigma,p}
        \right\}.
\]
If $X_1,\ldots,X_r$ are independent modulo $I_D\fol_\sigma^{\log}$, this is the
ordinary simultaneous centralizer of the matrices $\widetilde A_i$.
\end{definition}

\begin{figure}[t]
\centering
\begin{tikzcd}[row sep=large, column sep=huge]
\Theta \arrow[r, shift left=1.1ex, "{H_1}"] \arrow[r, shift right=1.1ex, swap, "{H_2}"] &
\Theta_{\mathrm{nf}} \arrow[loop right, distance=2.2em, "{G:=H_2H_1^{-1}}"]
\end{tikzcd}
\caption{If two gauges send $\Theta$ to the same Euler--Poisson normal form, their ratio stabilizes the normal form.}
\label{fig:uniq-torsor}
\end{figure}

\begin{lemma}\label{lem:casimir-centralizer-rigidity}
Let $G\in\GL_e(\OO_U)$ be holomorphic and single-valued.  If
\[
        (\delta G)G^{-1}+G\Theta_{\mathrm{nf}}G^{-1}=\Theta_{\mathrm{nf}},
\]
then $\delta G=0$ and $G$ takes values in
$Z_{\mathrm{eff}}(\widetilde A_\bullet)$.
\end{lemma}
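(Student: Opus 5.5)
We rewrite the stabilizer equation as a linear equation for $G$. Multiplying on the right by $G$ gives
\[
        \delta G+\Theta_{\mathrm{nf}}G-G\Theta_{\mathrm{nf}}=0,
        \qquad\text{i.e.}\qquad
        L(G)=0,\quad L:=\delta+[\Theta_{\mathrm{nf}},-],
\]
which is the homological operator of Lemma~\ref{lem:EP-homological-equation}, now attached to the normal form $\Theta_{\mathrm{nf}}=\sum_i\widetilde A_iX_i$. We expand $G$ $I_D$-adically, $G=\sum_{N\ge0}G^{(N)}$ with $G^{(N)}$ homogeneous of degree $N$ in $z_1,\dots,z_r$ and holomorphic in the transverse variables. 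By \eqref{eq:delta-monomial}, $\delta(z^\alpha g)=z^\alpha(\sum_i\alpha_iX_i)g+z^\alpha\delta g$. On the associated graded piece $Q_N$ the equation then splits, exactly as in the proof of Lemma~\ref{lem:residue-free-EP-acyclicity}, into a meridional part and a transverse part.

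\textbf{Meridional part.} We decompose each $G^{(N)}$ into blocks $\Hom(E_{\kappa'},E_\kappa)$ along the joint generalized eigenspaces of the $\widetilde A_i$ and refine by the nilpotent filtration $F^\bullet$. The leading term of $L$ on the $z^\alpha$-summand of the $(\kappa,\kappa')$-block is multiplication by the effective vector $\chi_{\alpha,\kappa,\kappa'}=[\alpha]_\sigma+\bar\lambda_\kappa-\bar\lambda_{\kappa'}$.
\begin{itemize}
\item[(a)] For $\alpha\neq0$ and $\kappa\neq\kappa'$, effective non-resonance (Definition~\ref{def:nonres}) gives $\chi\neq0$.
\item[(b)] For $\alpha\neq0$ and $\kappa=\kappa'$, we have $\chi=[\alpha]_\sigma$. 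This is nonzero provided $[\alpha]_\sigma\ne0$ for nonzero $\alpha\in\ZZ_{\ge0}^r$, which I take to be part of the effective setting. If it fails, I would argue the diagonal block separately via single-valuedness.
\end{itemize}
In degree $0$ a nonzero $\chi$ acts injectively: $\chi\wedge(-)$ is injective on $\mathfrak t^0$ when $\chi\ne0$. Passing down the finite nilpotent filtration, the top graded piece must vanish, then the next, and so on. Hence all $G^{(N)}$ with $N\ge1$ vanish modulo the transverse-Casimir contributions, so $G$ is independent of $z_1,\dots,z_r$ along the effective directions.

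\textbf{Degree-zero part.} For $\alpha=0$, the remaining term $G^{(0)}$ satisfies
\[
        \delta G^{(0)}+\sum_i[\widetilde A_i,G^{(0)}]X_i=0 .
\]
Here $\delta G^{(0)}$ only involves transverse derivatives composed with $\sigma^\#$, and these lie in the span of $\sigma^\#(dz_j)$, $j>r$. The term $\sum_i[\widetilde A_i,G^{(0)}]X_i$ lies in the span of the meridional generators. Reading this modulo $I_D\fol_\sigma^{\log}$ and separating the two kinds of components, we get $\sum_i[\widetilde A_i,G^{(0)}]\otimes[e_i]_\sigma=0$ in $\End(\C^e)\otimes\mathfrak t_{\sigma,p}$, so $G$ takes values in $Z_{\mathrm{eff}}$, and then $\delta G^{(0)}=0$. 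Since the equation is linear and the higher terms vanish, we conclude $\delta G=\delta G^{(0)}=0$. Moreover $\sum_i[\widetilde A_i,G]X_i=0$ forces the pointwise values of $G$ into $Z_{\mathrm{eff}}(\widetilde A_\bullet)$.

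An alternative route on $U^*$ is through the leafwise monodromy formula. Single-valuedness of $G$ forces it to commute with $\exp(-2\pi i\widetilde A_i)$. Non-resonance upgrades this to commuting with the $\widetilde A_i$ themselves, which again leaves $\delta G=0$.

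The main obstacle is the separation step in the degree-zero analysis: showing that $\delta G$ and the meridional term cannot cancel each other. This requires that Hamiltonian vector fields of transverse functions be independent of the $X_i$ modulo $I_D\fol_\sigma^{\log}$, or at least that the cancellation occurs only in the directions of $\mathfrak k_{\sigma,p}$, which the effective quotient is designed to discard. I would try first to prove this on the leaves in $X^\circ$, using (H2) and the fact that the degeneracy locus has codimension at least two, and then extend by holomorphic continuation.
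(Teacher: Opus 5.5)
\textbf{There is a genuine gap, and the statement itself fails.} The gap is the separation step you flag at the end, and it is not confined to degree zero. Your meridional step concludes from $\chi_{\alpha,\kappa,\kappa'}\neq0$ that the graded piece $z^\alpha g$ vanishes. That already presupposes that $\delta g$, for $g$ holomorphic in the transverse variables, cannot cancel $g\sum_i\chi_iX_i$ modulo $I_D\fol^{\log}_\sigma$.

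This presupposition is false in the lemma's own setting $p\in D\cap X^\circ$. Near a Poisson-regular point, (H3) makes each $D_i$ a union of local symplectic leaves. Hence $z_i=h_iu_i$ with $h_i$ a local Casimir and $u_i$ a unit, and so $X_i=\sigma^\#(dz_i/z_i)=\delta(\log u_i)$ is a holomorphic Hamiltonian field. The quotient $\mathfrak t_{\sigma,p}$ only records relations among the $X_i$ themselves, so it does not see this.

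Here is an explicit counterexample. On a polydisc in $\C^3$ centered at $p=0$, take $\sigma=z_1\,\partial_1\wedge\partial_2+\partial_2\wedge\partial_3$ and $D=\{z_1=0\}$. Then:
\begin{itemize}
\item $\sigma$ has constant rank two;
\item (H1)--(H3) hold, since $\{z_1,g\}=z_1\partial_2 g$;
\item $X_1=\partial_2=-\delta(z_3)$;
\item $\mathfrak t_{\sigma,p}=\C$, because $\partial_2\notin z_1\fol^{\log}_\sigma$.
\end{itemize}
Take $\widetilde A_1=\mathrm{diag}(a,b)$ with $a-b=\tfrac12$. The normal form is then effectively non-resonant, with Poincar\'e bound $\tfrac12$. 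With $E_{12}$ the elementary matrix,
\[
G=I+e^{-z_3/2}E_{12}
\quad\text{satisfies}\quad
\delta G=\tfrac12e^{-z_3/2}E_{12}\,\partial_2=[\widetilde A_1,G]\,X_1=[\Theta_{\mathrm{nf}},G],
\]
which is the stabilizer equation. Yet $\delta G\neq0$, and $G(p)$ does not commute with $\widetilde A_1$. In fact $\exp(\widetilde A_1z_3)$ gauges $\Theta_{\mathrm{nf}}$ to $0$, and $G=\exp(-\widetilde A_1z_3)(I+E_{12})\exp(\widetilde A_1z_3)$.

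Likewise $I+z_1^Ne^{(N-1/2)z_3}E_{12}$ is a stabilizer for every $N\ge0$, so the graded pieces of $G$ need not vanish. The missing step therefore cannot be supplied. The statement needs an extra hypothesis, roughly that $\sum_i\mu_iX_i\notin\delta(\OO_U)+I_D\fol^{\log}_\sigma$ whenever $[\mu]_\sigma\neq0$. By the mechanism above, this hypothesis fails at every Poisson-regular boundary point with $\mathfrak t_{\sigma,p}\neq0$.

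\textbf{Your fallbacks do not close this.}
\begin{itemize}
\item Case (b) is not covered by Definition~\ref{def:nonres}, which constrains only $[\alpha]_\sigma+\bar\lambda_\kappa-\bar\lambda_{\kappa'}$ for $\kappa\neq\kappa'$. ``Argue via single-valuedness'' is not yet an argument.
\item The monodromy route fails for the same geometric reason. Since $D_i$ is a union of leaves near $p$, a punctured leaf $L_x$ with $x\notin D$ does not meet $D_i$ and carries no meridian around it. So single-valuedness of $G$ forces no commutation with $\exp(-2\pi i\widetilde A_i)$. In the example, $G$ does not commute with $\exp(-2\pi i\widetilde A_1)=e^{-2\pi ia}\mathrm{diag}(1,-1)$.
\end{itemize}

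\textbf{Comparison with the paper.} The paper takes a residue route. It reads off $\sum_i[G,\widetilde A_i]\otimes[e_i]_\sigma=0$ from the $X_i$-components of $\delta G=[\Theta_{\mathrm{nf}},G]$, then applies the residue-free primitive argument of Lemma~\ref{lem:residue-free-EP-acyclicity}. It passes over the same point in two ways:
\begin{itemize}
\item The $X_i$-component of a section of $\fol^{\log}_\sigma$ is not well defined once $X_i\in\delta(\OO_U)$. In the example, $\delta G$ itself has non-zero $X_1$-component.
\item It never invokes non-resonance, which is needed even classically: $zG'=[A,G]$ holds for $A=\mathrm{diag}(1,0)$ and $G=I+zE_{12}$.
\end{itemize}

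\textbf{Minor slip.} The stabilizer equation is $\delta G=[\Theta_{\mathrm{nf}},G]$, not $\delta G+[\Theta_{\mathrm{nf}},G]=0$. This is harmless for your argument, because Definition~\ref{def:nonres} is symmetric in $\kappa,\kappa'$.
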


\begin{proof}
The stabilizer equation is equivalent to
\[
        \delta G=[\Theta_{\mathrm{nf}},G].
\]
Taking the logarithmic residue along the effective boundary generators gives
\[
        \sum_{i=1}^r [G,\widetilde A_i]\otimes [e_i]_\sigma=0
        \quad\text{in }\End(\C^e)\otimes\mathfrak t_{\sigma,p}.
\]
Thus the right-hand side of the stabilizer equation has zero effective
logarithmic residue.  The equation is now a residue-free logarithmic equation
for $G$.  Since $G$ is holomorphic and single-valued, the logarithmic primitive
terms which would appear from a non-zero meridional part are forbidden.  The
same residue-free primitive argument used in Lemma~\ref{lem:residue-free-EP-acyclicity}
therefore gives $\delta G=0$.  Substituting this back into the stabilizer
equation gives the displayed effective centralizer condition.
\end{proof}

\begin{theorem} \label{thm:uniq}
Work in an adapted SNC chart $U\ni p$ and assume that
\[
        \Theta_{\mathrm{nf}}
        =
        \sum_{i=1}^r \widetilde A_iX_i,
        \qquad
        [\widetilde A_i,\widetilde A_j]=0.
\]
Let $G\in\GL_e(\OO_U)$ satisfy
\[
        (\delta G)G^{-1}+G\Theta_{\mathrm{nf}}G^{-1}=\Theta_{\mathrm{nf}}.
\]
Then $G$ is Casimir-valued and takes values in
$Z_{\mathrm{eff}}(\widetilde A_\bullet)$.  Consequently, if $H_1$ and $H_2$ are
two holomorphic gauges sending the same $\Theta$ to the same normal form
$\Theta_{\mathrm{nf}}$, then $H_2H_1^{-1}$ is Casimir-valued and
$Z_{\mathrm{eff}}(\widetilde A_\bullet)$-valued.
\end{theorem}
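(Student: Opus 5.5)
The first assertion is exactly Lemma~\ref{lem:casimir-centralizer-rigidity}, so the plan is to invoke that lemma for the stabilizer statement and then deduce the torsor consequence formally. Before doing so, I would rewrite the stabilizer equation in the linear form
\[
\delta G=[\Theta_{\mathrm{nf}},G]=\sum_{i=1}^r [\widetilde A_i,G]\,X_i,
\]
which is an equality of matrices with entries in $\fol_\sigma^{\log}(U)$.

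For the first assertion I would also sketch an independent route, to make the residue argument concrete rather than only quoting the lemma. Expand $G=\sum_{\alpha}G_\alpha z^\alpha$ $I_D$-adically, with coefficients $G_\alpha$ holomorphic in the transverse parameters. By \eqref{eq:delta-monomial},
\[
\delta(z^\alpha G_\alpha)=z^\alpha\Bigl(\sum_i\alpha_iX_i\Bigr)G_\alpha+z^\alpha\,\delta G_\alpha.
\]
Comparing leading boundary classes modulo $I_D\fol_\sigma^{\log}$ and using Definition~\ref{def:effective-ep-quotient}, the order-$\alpha$ component of the equation becomes the effective identity
\[
[\alpha]_\sigma\otimes G_\alpha=\sum_i[\widetilde A_i,G_\alpha]\otimes[e_i]_\sigma \quad\text{in }\End(\C^e)\otimes\mathfrak t_{\sigma,p},
\]
up to transverse terms $\delta G_\alpha$. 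Decompose $G_\alpha$ into blocks $\Hom(E_{\kappa'},E_\kappa)$ and use the nilpotent filtration from Lemma~\ref{lem:residue-free-EP-acyclicity}. On the associated graded, the block equation reads $\chi_{\alpha,\kappa,\kappa'}\otimes G_\alpha^{\kappa\kappa'}=0$. Whenever $\chi\neq0$, which holds for $\alpha\neq0$ and $\kappa\ne\kappa'$ by non-resonance, this forces the block to vanish.

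The remaining orders are $\alpha=0$ and the diagonal blocks with $\alpha\ne0$. At $\alpha=0$ the equation gives $G(p)$-level centralizer conditions. The diagonal blocks with $\alpha\ne0$ are where the single-valuedness argument of the lemma enters: a nonzero meridional weight would require a factor $z^{-\lambda}$, which is excluded for holomorphic $G$. This is the step I expect to be the main obstacle. Without non-resonance, the statement $\delta G=0$ can fail: for instance, a diagonal shift $\widetilde A_1=\mathrm{diag}(1,0)$ admits the stabilizer $G=\begin{pmatrix}1&z_1\\0&1\end{pmatrix}$. So I would either read the theorem as carrying the standing non-resonance hypothesis of Section~\ref{sec:PPD}, under which the lemma's argument is valid, or cite Lemma~\ref{lem:casimir-centralizer-rigidity} verbatim. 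Once $\delta G=0$ is established, the equation gives $[\Theta_{\mathrm{nf}},G]=0$. Taking effective residues then yields $\sum_i[G,\widetilde A_i]\otimes[e_i]_\sigma=0$ pointwise, that is, $G(x)\in Z_{\mathrm{eff}}(\widetilde A_\bullet)$.

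For the consequence, suppose $H_1,H_2$ send $\Theta$ to $\Theta_{\mathrm{nf}}$, and write $g\cdot\Theta:=(\delta g)g^{-1}+g\Theta g^{-1}$. I would check the cocycle identity $(gh)\cdot\Theta=g\cdot(h\cdot\Theta)$ using the Leibniz rule $\delta(gh)=(\delta g)h+g\,\delta h$. Setting $G:=H_2H_1^{-1}$, this gives $G\cdot\Theta_{\mathrm{nf}}=H_2\cdot(H_1^{-1}\cdot\Theta_{\mathrm{nf}})=H_2\cdot\Theta=\Theta_{\mathrm{nf}}$. Hence $G$ satisfies the stabilizer equation, and the first part shows that it is Casimir-valued and $Z_{\mathrm{eff}}$-valued.
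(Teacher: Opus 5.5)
\textbf{Verdict: genuine gap.} Your route matches the paper's. The cocycle identity $(gh)\cdot\Theta=g\cdot(h\cdot\Theta)$, which the paper leaves implicit, reduces the claim about $H_2H_1^{-1}$ to the stabilizer statement. For that statement you, like the paper, rely on Lemma~\ref{lem:casimir-centralizer-rigidity}. The gap is the step $\delta G=0$, which that lemma does not actually deliver.

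Your resonant example is correct. Your fallback is not: under the standing non-resonance hypothesis the lemma's argument is still invalid, and the stabilizer claim fails in the effectively non-resonant regime too. A counterexample satisfying (H1)--(H3):
\begin{itemize}
\item Take a polydisc in $\C^3$ with coordinates $(s,t,y)$ and $\sigma=\partial_s\wedge\partial_t$. It has constant rank, so everything lies in $X^\circ$.
\item Let $D=\{y=0\}$. It is Poisson since $y$ is a Casimir.
\item Then $z_1=e^{s}y$, $z_2=s$, $z_3=t$ is an adapted SNC chart, and $X_1=\sigma^\#(dz_2+dy/y)=\delta(z_2)=\partial_t$.
\item $X_1$ does not vanish at the base point, hence is nonzero modulo $I_D\fol^{\log}_\sigma$. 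So $\mathfrak t_{\sigma,p}=\C$ and $Z_{\mathrm{eff}}$ is the ordinary centralizer.
\item Take $\widetilde A_1=\mathrm{diag}(\lambda,\mu)$ with $c:=\lambda-\mu\notin\ZZ$. This is effectively non-resonant, with the Poincar\'e bound.
\end{itemize}
Then
\[
G=\begin{pmatrix}1&e^{cz_2}\\0&1\end{pmatrix},
\qquad
\delta G=\begin{pmatrix}0&c\,e^{cz_2}X_1\\0&0\end{pmatrix}=[\widetilde A_1,G]\,X_1=[\Theta_{\mathrm{nf}},G],
\]
so $G$ stabilizes $\Theta_{\mathrm{nf}}$. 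Yet $G$ is neither Casimir-valued nor $Z_{\mathrm{eff}}$-valued, not even at $p$.

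The example also shows where your own expansion breaks. Here $G$ has $I_D$-order $0$, and the transverse term $\delta G_0=\sigma^\#(dG_0)$ that you set aside is itself a multiple of $X_1$, because $\sigma^\#(dz_2)=X_1$. It exactly balances $\chi\otimes G_0$, with $\chi=\pm c\neq0$. So neither the centralizer condition at $\alpha=0$ nor the vanishing of blocks with $\chi\neq0$ follows. Multiplying the $(1,2)$ entry by the Casimir $y^k=z_1^ke^{-kz_2}$ gives stabilizers of every order $k$.

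Likewise, the single-valuedness argument in the lemma forbids nothing. The multivalued factor $z_1^{c}$ combines with the multivalued Casimir $y^{-c}$ into the single-valued $e^{cs}$.

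This is not an artifact of the chart. On $X^\circ$, (H3) forces each $D_i$ to be saturated by symplectic leaves. So locally $z_i=u_if_i$ with $f_i$ a Casimir and $u_i$ a unit. Then $X_i=\delta(\log u_i)$, and $\Theta_{\mathrm{nf}}=\delta F$ with $F=\sum_i\widetilde A_i\log u_i$. Consequently $e^{F}Ce^{-F}$ stabilizes $\Theta_{\mathrm{nf}}$ for every invertible Casimir-valued $C$.

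The first assertion, and with it the uniqueness consequence, therefore cannot be proved from the stated hypotheses, even with non-resonance. What is missing is an additional hypothesis, or a corrected description of the stabilizer, rather than a better version of the argument.
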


\begin{proof}
Apply Lemma~\ref{lem:casimir-centralizer-rigidity} to $G$.  For two normalizing
gauges, apply the same statement to $G=H_2H_1^{-1}$.
\end{proof}

\subsection{A twisted leafwise groupoid near an SNC divisor  }
\label{subsec:twisted-leafwise}

Let $p\in D\cap X^\circ$. After shrinking an adapted polydisc $V\subset X^\circ$ around $p$ and reindexing the
local irreducible components of $D$ through $p$, we may assume there are holomorphic coordinates
$(z_1,\dots,z_n)$ on $V$ such that
\begin{equation}\label{eq:SNC-local}
D\cap V=\{z_1 z_2\cdots z_r=0\},
\qquad
p\in \{z_1=\cdots=z_r=0\}.
\end{equation}
Write $V^*:=V\setminus D$.

\subsubsection*{1. The oriented blow-up and canonical angular directions}

Let $\widetilde V:=\Blor_{D\cap V}(V)$ be the real oriented blow-up along $D\cap V$ \cite{Sabbah13}.
On a collar neighborhood of the boundary we may use polar coordinates
\begin{equation}\label{eq:polar}
z_i=\rho_i e^{i\theta_i},
\qquad
\rho_i\in[0,\varepsilon),
\ \theta_i\in\mathbb R/2\pi\mathbb Z,
\qquad
i=1,\dots,r,
\end{equation}
together with $z_{r+1},\dots,z_n$ as interior coordinates. Then $\widetilde V$ is a real manifold with corners,
$\mathrm{int}(\widetilde V)\simeq V^*$, and the boundary stratification is indexed by subsets
$I\subset\{1,\dots,r\}$ via
\[
\partial_I\widetilde V=\{\rho_i=0\ (i\in I),\ \rho_j>0\ (j\notin I)\}.
\]
At the deepest corner stratum $\partial_{\{1,\dots,r\}}\widetilde V$ over $p$, the angular variables
$\theta_1,\dots,\theta_r$ parametrize a torus $(S^1)^r$.
For later use, keep in mind the standard local model (drawn below for $r=2$):
\[
\widetilde V \simeq [0,\varepsilon)^r\times (S^1)^r \times \mathbf D^{2(n-r)}.
\]
In particular, when $r=2$ the base corner variables $(\rho_1,\rho_2)\in[0,\varepsilon)^2$ record approach to
$D_1\cup D_2$, while $(\theta_1,\theta_2)$ are the canonical angular directions.

\begin{figure}[t]
\centering
\begin{tikzpicture}[x=1cm,y=1cm,>=Stealth,font=\small,line join=round,line cap=round]
 
  \def\W{4.4}   
  \def\H{3.0}    
  \def\DX{1.45}   
  \def\DY{0.75}

  \coordinate (O) at (0,0);
  \coordinate (A) at (\W,0);
  \coordinate (B) at (\W,\H);
  \coordinate (C) at (0,\H);

  \coordinate (O2) at (\DX,\DY);
  \coordinate (A2) at (\W+\DX,\DY);
  \coordinate (B2) at (\W+\DX,\H+\DY);
  \coordinate (C2) at (\DX,\H+\DY);

  \fill[gray!8]  (O) -- (A) -- (B) -- (C) -- cycle;          
  \fill[gray!10] (O) -- (A) -- (A2) -- (O2) -- cycle;        
  \fill[gray!12] (O) -- (C) -- (C2) -- (O2) -- cycle;        
  \fill[gray!6]  (O2) -- (A2) -- (B2) -- (C2) -- cycle;

  \draw[thick] (O) -- (A) -- (B) -- (C) -- cycle;
  \draw[thick] (O2) -- (A2) -- (B2) -- (C2) -- cycle;
  \draw[thick] (O) -- (O2);
  \draw[thick] (A) -- (A2);
  \draw[thick] (B) -- (B2);
  \draw[thick] (C) -- (C2);

  \draw[ultra thick] (O) -- (A);  
  \draw[ultra thick] (O) -- (C);

  \draw[->] (O) -- (\W+0.75,0) node[below] {$\rho_1$};
  \draw[->] (O) -- (0,\H+0.75) node[left] {$\rho_2$};
  \draw[->] (O) -- (\DX+0.35,\DY+0.20) node[above left] {$\mathbf D^{2(n-2)}$};

  \node at (0.62*\W,0.68*\H) {$\mathrm{int}(\widetilde V)\simeq V^*$};

  \node[rotate=90] at (-0.42,0.60*\H) {$\partial_{\{1\}}\widetilde V\ (\rho_1=0)$};
  \node at (0.58*\W,-0.40) {$\partial_{\{2\}}\widetilde V\ (\rho_2=0)$};

  \fill (O) circle (2pt);
  \node[anchor=north east] at (0,0) {$\partial_{\{1,2\}}\widetilde V$};

  \coordinate (S1) at (\W+\DX+2.25,\H+\DY-0.55);
  \coordinate (S2) at (\W+\DX+2.25,\DY+1.00);
  \coordinate (T0) at (\W+\DX+1.95,\DY-0.15);

  \draw[densely dashed,->] ($(C)!0.62!(C2)$) -- (S1);
  \draw[densely dashed,->] ($(A)!0.55!(A2)$) -- (S2);
  \draw[densely dashed,->] ($(O)!0.55!(O2)$) -- (T0);

  \draw (S1) circle (0.33);
  \draw[->] ($(S1)+(0,0.33)$) arc (90:420:0.33);
  \node[anchor=west] at ($(S1)+(0.55,0)$) {$S^1_{\theta_1}$};

  \draw (S2) circle (0.33);
  \draw[->] ($(S2)+(0,0.33)$) arc (90:420:0.33);
  \node[anchor=west] at ($(S2)+(0.55,0)$) {$S^1_{\theta_2}$};

  \begin{scope}[shift={(T0)}]
    \def\Rt{0.62}       
    \def\tube{2.6mm}   

    \begin{scope}[yscale=0.32]  
      \draw[very thick,double=white,double distance=\tube,line cap=round,line join=round]
        (0:\Rt) arc (0:180:\Rt);
      \draw[very thick,double=white,double distance=\tube,line cap=round,line join=round]
        (180:\Rt) arc (180:360:\Rt);

      \draw[->] (90:\Rt) arc (90:420:\Rt);  
    \end{scope}
  \end{scope}
  \node[anchor=west] at ($(T0)+(1.55,0)$) {$(S^1)^2_{\theta_1,\theta_2}$};

\end{tikzpicture}
\caption{Schematic local model for the oriented blow-up when $r=2$: the corner variables $(\rho_1,\rho_2)\in[0,\varepsilon)^2$ encode approach to $D_1\cup D_2$; each boundary face carries the canonical angular circle; over the corner stratum the angular fiber is $(S^1)^2$. The full local chart is $[0,\varepsilon)^2\times (S^1)^2\times \mathbf D^{2(n-2)}$.}
\label{fig:corners}
\end{figure}

\begin{remark}\label{rem:not-generated}
The usual leafwise fundamental groupoid $\Mon(\fol)$ is not defined as generated by meridians inside leaves:
its arrows are leafwise paths modulo leafwise homotopy. In general, a geometric meridian around $D_i$ is an ambient
loop in $V^*$ and need not be leafwise. The twisted/tangential construction below adds a \emph{canonical} meridional
isotropy at the boundary (tangential basepoints), so residue and winding data have a canonical boundary home without
choosing leafwise meridian loops.
\end{remark}

\subsubsection*{ Leafwise fundamental groupoids and logarithmic lifting}

Let $\fol:=\fol_\sigma|_{V^*}$ be the Hamiltonian foliation on $V^*$ (viewed as a real foliation with complex leaves),
and let $\Mon(\fol)|_{V^*}\rightrightarrows V^*$ denote its leafwise fundamental groupoid: objects are points of $V^*$,
and arrows are leafwise $C^1$ paths modulo leafwise homotopy relative endpoints.
In the Poisson normal-form charts used later, the foliation is generated by holomorphic logarithmic vector fields
along $D\cap V$ (sections of $T_V(-\log(D\cap V))$). Under this hypothesis, the foliation extends canonically to the
oriented blow-up:

\subsubsection*{ The twisted groupoid by tangential completion (2-pushout)}

Set $U:=\mathrm{int}(\widetilde V)\simeq V^*$. Choose a collar neighborhood
\[
C\subset \widetilde V
\qquad\text{with}\qquad
C\simeq [0,\varepsilon)\times \partial\widetilde V,
\]
so that $U\cap C$ is an open neighborhood of $\partial\widetilde V$ inside $U$.
In an adapted SNC chart, a logarithmic holomorphic vector field can be written as
$X=\sum_{i\le r} z_i b_i(z)\partial_{z_i}+\sum_{j>r} b_j(z)\partial_{z_j}$.
On the oriented blow-up with $z_i=\rho_i e^{i\theta_i}$, the identity
$z_i\partial_{z_i}=\tfrac12(\rho_i\partial_{\rho_i}-i\partial_{\theta_i})$
shows that $\Re(X)$ and $\Im(X)$ lift to smooth real vector fields on $\widetilde V$
tangent to each boundary face $\{\rho_i=0\}$.
Therefore, whenever $\fol|_{U\cap C}$ is generated by logarithmic fields (as in our normal-form charts),
it extends canonically to a foliation $\widetilde\fol$ on the collar $C$.

Assume henceforth that we have fixed such an extension $\widetilde\fol$ on $C$. We adjoin meridional isotropy only over the boundary of the collar, not over all of $C$. Set
$$
C_\partial:=\partial\widetilde V
$$
inside the collar, and define the boundary meridional groupoid
$$
\mathcal T_\partial:=C_\partial\times \ZZ^r\rightrightarrows C_\partial.
$$
The $i$-th standard generator corresponds to the positive $2\pi$-rotation $\theta_i\mapsto\theta_i+2\pi$ of the angular variable along the boundary. First form the boundary-enlarged collar groupoid
$$
\mathcal C^{\mathrm{tw}}
:=
\Mon(\widetilde\fol)|_{C}
\sqcup_{\Mon(\widetilde\fol)|_{C_\partial}}
\bigl(\Mon(\widetilde\fol)|_{C_\partial}\times_{C_\partial} \mathcal T_\partial\bigr).
$$
On the overlap $U\cap C$ we have $\Mon(\widetilde\fol)|_{U\cap C}=\Mon(\fol)|_{U\cap C}$, and the natural functor to $\mathcal C^{\mathrm{tw}}$ is the usual inclusion with zero meridional label.

\begin{definition}[Boundary-twisted leafwise groupoid via 2-pushout]\label{def:twisted-pushout}
Define the boundary-twisted leafwise groupoid $\Mon^{\mathrm{tw}}(\fol;D)\rightrightarrows \widetilde V$ as the 2-pushout in topological groupoids
$$
\Mon^{\mathrm{tw}}(\fol;D)
:=
\Mon(\fol)|_{U}
\sqcup_{\Mon(\fol)|_{U\cap C}}
\mathcal C^{\mathrm{tw}}.
$$
Thus the interior restriction is unchanged:
$$
\Mon^{\mathrm{tw}}(\fol;D)|_U=\Mon(\fol)|_U.
$$
\end{definition}

\begin{figure}[t]
\centering
\begin{tikzcd}[row sep=huge, column sep=huge]
\Mon(\fol)|_{U\cap C} \arrow[r, "{\text{zero label}}"] \arrow[d] &
\mathcal C^{\mathrm{tw}} \arrow[d] \\
\Mon(\fol)|_{U} \arrow[r] &
\Mon^{\mathrm{tw}}(\fol;D)
\end{tikzcd}
\caption{The boundary-twisted leafwise groupoid is obtained by adjoining canonical tangential meridians only along the oriented boundary and gluing this boundary-enlarged collar groupoid to the ordinary interior leafwise groupoid.}
\label{fig:pushout}
\end{figure}

\begin{remark}\label{rem:canonical-meridians}
Let $\tilde x\in\partial_{\{1,\dots,r\}}\widetilde V$ lie over $p$. For each $i=1,\dots,r$ the loop $t_i$ that rotates $\theta_i$ once defines an isotropy element at $\tilde x$ coming from $\mathcal T_\partial$, and hence in $\Mon^{\mathrm{tw}}(\fol;D)$. These isotropy elements commute and generate a canonical subgroup
$$
\ZZ^r\subset \mathrm{Iso}_{\Mon^{\mathrm{tw}}(\fol;D)}(\tilde x),
$$
independent of any choice of small meridians inside punctured leaves.
\end{remark}

\begin{proposition} \label{prop:pushout}
The groupoid $\Mon^{\mathrm{tw}}(\fol;D)$ is generated by the interior leafwise arrows, the collar leafwise arrows, and the canonical commuting boundary meridians, with the relations already present in the two leafwise groupoids, the overlap identifications on $U\cap C$, and the commutativity of the meridional group $\ZZ^r$.
\end{proposition}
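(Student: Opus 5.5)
The proof unwinds the two 2-pushouts in Definition~\ref{def:twisted-pushout} and in the definition of $\mathcal C^{\mathrm{tw}}$, and uses the standard description of a pushout of groupoids along a common subgroupoid by generators and relations. Recall the principle, for discrete groupoids or topological groupoids glued along open full subgroupoids. If $\mathcal G_1\leftarrow\mathcal H\rightarrow\mathcal G_2$ are groupoid maps and $\mathcal H$ is an open subgroupoid on both sides, then $\mathcal G_1\sqcup_{\mathcal H}\mathcal G_2$ is the groupoid freely generated by the arrows of $\mathcal G_1$ and $\mathcal G_2$. The relations are the composition laws of each $\mathcal G_k$ and the identification of the two images of each arrow of $\mathcal H$. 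This follows from the universal property: any pair of compatible functors out of $\mathcal G_1,\mathcal G_2$ factors uniquely through the presented groupoid. The 2-categorical version only replaces "identification" by a specified natural isomorphism. Since both legs here are inclusions of restrictions to open sets, the 2-pushout is equivalent to the strict one, so no further coherence relations appear.

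The first step is to apply this to $\mathcal C^{\mathrm{tw}}$. It is glued from $\Mon(\widetilde\fol)|_C$ and the product groupoid $\Mon(\widetilde\fol)|_{C_\partial}\times_{C_\partial}\mathcal T_\partial$ along $\Mon(\widetilde\fol)|_{C_\partial}$. This subgroupoid sits in the product as the zero-label part. The product is generated by the leafwise arrows over $C_\partial$ and the isotropy generators $t_1,\dots,t_r$ of $\mathcal T_\partial=C_\partial\times\ZZ^r$. Its relations are those of $\Mon(\widetilde\fol)|_{C_\partial}$, the abelian relations $t_it_j=t_jt_i$, and the commutation of $\ZZ^r$ with the leafwise factor that the fibered product imposes. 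After amalgamation, $\mathcal C^{\mathrm{tw}}$ is generated by the collar leafwise arrows and the canonical commuting meridians of Remark~\ref{rem:canonical-meridians}. The relations are those of $\Mon(\widetilde\fol)|_C$ together with the commutativity of $\ZZ^r$ and the product compatibility.

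The second step is to apply the same principle to the outer pushout. Here $\Mon(\fol)|_{U\cap C}$ maps into $\Mon(\fol)|_U$ by inclusion and into $\mathcal C^{\mathrm{tw}}$ by the zero-label functor. The latter equals the inclusion into $\Mon(\widetilde\fol)|_C$, since $\Mon(\widetilde\fol)|_{U\cap C}=\Mon(\fol)|_{U\cap C}$. The generators now also include the interior leafwise arrows of $\Mon(\fol)|_U$, and the added relations are exactly the overlap identifications on $U\cap C$. Combining the two steps gives the stated presentation.

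I expect the main obstacle to be justifying that the 2-pushout in topological groupoids is computed by this algebraic presentation. Two issues arise: the topology on the glued arrow space, and the fact that the homotopy-invariant 2-pushout could a priori add formal 2-cell data. The approach is to check that both gluing maps are open embeddings onto saturated restrictions. The pushout can then be realized on the object space $\widetilde V=U\cup C$, with arrows topologized by the finite-composite quotient topology. The universal property is then checked directly on generators, so the strict pushout represents the 2-pushout.
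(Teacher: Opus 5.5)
Your proposal is correct and takes the same route as the paper. The paper's proof is just the universal property of the two pushouts: adjoin the commuting meridians along $C_\partial$, then glue to the interior groupoid along $U\cap C$. You write out the generators-and-relations presentation this yields.

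Two minor precisions:
\begin{itemize}
\item $C_\partial=\partial\widetilde V$ is closed, not open, in $C$, so your ``open restriction'' justification is wrong for the inner pushout. This is harmless: identifying the 2-pushout with the strict one only requires the legs to be injective on objects, which they are, and the presentation is a statement about the underlying abstract groupoid.
\item The ``product compatibility'' relation you extract from the fibered product (boundary meridians commute with boundary leafwise arrows) is a genuine extra relation. The statement leaves it implicit, but the paper uses it as the ``collar relations'' in Proposition~\ref{prop:meridional-character}.
\end{itemize}
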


\begin{proof}
This is the universal property of the two pushouts above. The first pushout enlarges the collar groupoid only along $C_\partial$ by adjoining the commuting tangential meridians. The second glues this boundary-enlarged collar groupoid to the ordinary interior leafwise groupoid along their common restriction to $U\cap C$.
\end{proof}

\subsubsection*{Meridional isotropy and the residue character from the PPD normal form}

Assume $\nabla^{\mathrm{Pois}}$ is Poisson-flat, has fixed Euler--Poisson principal part, and is effectively non-resonant at $p$ so that Theorem~\ref{thm:PPD-local} provides,
after shrinking $V$, an Euler--Poisson gauge with constant commuting residues
$\widetilde A_1,\dots,\widetilde A_r\in\mathrm{Mat}_{e\times e}(\CC)$ such that
\begin{equation}\label{eq:PPD-again}
\widetilde\Theta
=\sum_{i=1}^r \widetilde A_i\,\sigma^\# \Big(\frac{dz_i}{z_i}\Big),
\qquad
[\widetilde A_i,\widetilde A_j]=0.
\end{equation}

Let $\rho:\Mon(\fol)|_{U}\to \GL_e(\CC)$ be the leafwise monodromy representation determined by $\nabla^{\mathrm{Pois}}$
on $U\simeq V^*$.

\begin{proposition}[Meridional character]\label{prop:meridional-character}
The residue tuple defines a canonical character of the boundary meridional subgroup by
\begin{equation}\label{eq:char}
\ZZ^r\longrightarrow \GL_e(\CC),
\qquad
m=(m_1,\dots,m_r)\longmapsto
\exp\Big(-2\pi i\sum_{i=1}^r m_i\,\widetilde A_i\Big).
\end{equation}
Equivalently, the $i$-th canonical boundary meridian acts by $\exp(-2\pi i\,\widetilde A_i)$. Together with the interior leafwise monodromy, this character gives a representation of $\Mon^{\mathrm{tw}}(\fol;D)$ whenever, for every boundary leafwise arrow $\gamma$ for which both sides are defined, the transported residue tuple is preserved, or equivalently
\[
\rho(\gamma)\,\exp(-2\pi i\,\widetilde A_i)\,\rho(\gamma)^{-1}=\exp(-2\pi i\,\widetilde A_i)
\qquad(1\le i\le r).
\]
\end{proposition}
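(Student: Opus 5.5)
The proof has three parts: check that the formula \eqref{eq:char} is a homomorphism, identify $\exp(-2\pi i\widetilde A_i)$ with the transport of the normal form around the $i$-th angular circle, and then invoke the universal property of the 2-pushout (Proposition~\ref{prop:pushout}).

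\emph{Step 1: the character.} Since $[\widetilde A_i,\widetilde A_j]=0$ by Theorem~\ref{thm:PPD-local}, the exponentials $\exp(-2\pi i m_i\widetilde A_i)$ commute. Hence
\[
\exp\Big(-2\pi i\sum_i m_i\widetilde A_i\Big)=\prod_i\exp(-2\pi i\,\widetilde A_i)^{m_i},
\]
and $m\mapsto\exp(-2\pi i\sum_i m_i\widetilde A_i)$ is a group homomorphism $\ZZ^r\to\GL_e(\CC)$. I would call it canonical because, by Theorem~\ref{thm:uniq}, any other normalizing gauge differs from $H$ by a Casimir-valued $G$ with values in $Z_{\mathrm{eff}}(\widetilde A_\bullet)$. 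At the corner point such a $G$ conjugates the character to itself, so it is intrinsic up to this residual gauge.

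\emph{Step 2: identification with meridional transport.} In Euler--Poisson gauge, a horizontal section satisfies $\delta Y=-\sum_i\widetilde A_iX_i Y$. Using \eqref{lem:logHam-poisson}, the formal solution is $Y=\exp(-\sum_i\widetilde A_i\log z_i)Y_0$. Continuing it along $\theta_i\mapsto\theta_i+2\pi$ multiplies it by $\exp(-2\pi i\widetilde A_i)$; this is the computation already recorded in the subsection on leafwise meridians. Consequently, the lifted foliation $\widetilde\fol$ on the collar $C$ carries the flat transport of the normal form. The collar leafwise groupoid then admits the representation $\rho_C$ given by this transport, and $\rho_C$ agrees with $\rho$ on the overlap $U\cap C$, where meridional labels are zero.

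\emph{Step 3: the pushout.} On the boundary-enlarged collar groupoid $\mathcal C^{\mathrm{tw}}$, the map $\mathcal T_\partial\to\GL_e$ is the constant character of Step~1. A pair $(\gamma,m)$ in $\Mon(\widetilde\fol)|_{C_\partial}\times_{C_\partial}\mathcal T_\partial$ is sent to $\rho_C(\gamma)\chi(m)$. This assignment is a functor exactly when the two factors commute, i.e.\ when $\rho(\gamma)\exp(-2\pi i\widetilde A_i)\rho(\gamma)^{-1}=\exp(-2\pi i\widetilde A_i)$ for every boundary leafwise arrow $\gamma$. Conversely, any representation must satisfy these relations, because the fibre product groupoid has the $\ZZ^r$-labels commuting with the leafwise arrows; this gives the stated equivalence. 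With the commutation in hand, the universal property of the first pushout produces a functor on $\mathcal C^{\mathrm{tw}}$. The second pushout then glues it to $\rho$ on $\Mon(\fol)|_U$, since the two agree on $\Mon(\fol)|_{U\cap C}$ by Step~2.

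\emph{Main obstacle.} I expect the hardest part to be Step~2's compatibility on the collar: showing that leafwise transport extends continuously to arrows of $\widetilde\fol$ that lie in the boundary faces. I would handle it in the normal-form gauge, where the transport is explicit via $\exp(-\sum_i\widetilde A_i\log z_i)$. On the boundary this expression depends only on the angles $\theta_i$ and on leafwise paths in the $z_{j>r}$ directions, and the latter have trivial transport since the connection has no such component. It would remain to check that this explicit transport extends continuously to $\rho_\partial=0$ along arrows of $\widetilde\fol$.
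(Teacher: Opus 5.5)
Your core argument is the paper's. Commutativity of the $\widetilde A_i$ makes \eqref{eq:char} a homomorphism, and the sign is fixed by $\delta Y=-\Theta Y$. The universal property of the two pushouts then reduces the extension to functoriality on $\Mon(\widetilde\fol)|_{C_\partial}\times_{C_\partial}\mathcal T_\partial$, which is exactly the displayed centralizer condition.

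You depart from the paper in Step~2. There you assert that the normal-form transport defines a representation $\rho_C$ of the whole collar groupoid agreeing with $\rho$ on $U\cap C$. You then concede that its continuity at the boundary is unchecked, so Step~3 as written rests on an unproved claim. The paper never constructs such a $\rho_C$. Its proof takes the boundary leafwise transport $\rho(\gamma)$ as given wherever it is defined (this is the role of the clause ``for which both sides are defined'') and only applies the pushout. If you treat that transport as part of the hypothesis, Step~2 can be dropped and your proof is the paper's.

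If you do want to build $\rho_C$, the recipe in your last paragraph would fail. Along an interior leafwise path the normal-form transport is $\exp\bigl(-\sum_j\widetilde A_j\int_\gamma dz_j/z_j\bigr)$. The real part $\Delta\log\rho_j$ of $\int_\gamma dz_j/z_j$ depends on ratios of radii, so it need not die as $\gamma$ approaches $D$.

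For instance, take $\sigma=\partial_s\wedge\partial_t$ on $\CC^3_{s,t,y}$, $D=\{y=0\}$, and the chart $(z_1,z_2,z_3)=(e^sy,s,t)$; all of (H1)--(H3) hold and $\sigma$ has constant rank. On leaves $dz_1/z_1=ds$. So moving $s$ by a real number $a$ inside the leaf $\{y=\epsilon\}$ has transport $\exp(-a\widetilde A_1)$ for every $\epsilon>0$. The limiting boundary path keeps $\theta_1$ fixed and moves only $z_2$, so your recipe assigns it the identity. A continuous extension must be computed from the full pairing of $dz_j/z_j$ with the lifted leafwise fields, not from the angles alone. That pairing is smooth up to the boundary, since $X(z_j)/z_j$ is holomorphic for logarithmic $X$.

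Note also that every transport of this exponential form lies in the commutative algebra generated by the $\widetilde A_j$. So in the normal-form frame the centralizer condition holds automatically.

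Finally, your canonicity remark in Step~1 overreaches. Theorem~\ref{thm:uniq} only compares gauges leading to the same normal form. Moreover, when the $X_i$ satisfy relations, an element of $Z_{\mathrm{eff}}(\widetilde A_\bullet)$ need not commute with each $\widetilde A_i$. The paper's proof claims nothing beyond the homomorphism property there.
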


\begin{proof}
The matrices $\widetilde A_i$ commute, hence the formula \eqref{eq:char} is a group homomorphism from $\ZZ^r$ to $\GL_e(\CC)$. The sign is dictated by the horizontal transport convention $\delta Y=-\Theta Y$. By the pushout description, a representation of the boundary-twisted groupoid is exactly a pair consisting of the interior leafwise representation and a representation of the boundary meridional isotropy whose restrictions agree on the overlap and satisfy the collar relations. These relations amount to invariance of the meridional automorphisms under boundary leafwise transport, which is precisely the displayed centralizer condition.
\end{proof}

\begin{remark} \label{rem:residues-control}
Equation \eqref{eq:char} should be read as follows: the residue matrices constrain exactly the restriction of the
(monodromy) representation to the canonical tangential meridians. Monodromy along other leafwise loops inside punctured
leaves depends on the global topology/dynamics of the leaves and is not determined by residues alone.
\end{remark}

\section{Rank two meromorphic Poisson modules and Poisson triples}\label{sec:rank2-triples}

Throughout this section $X$ denotes a smooth complex projective variety and
$\sigma\in H^0(X,\wedge^2T_X)$ a holomorphic Poisson bivector. We write
$\sigma^\#: \Omega_X^1\to T_X$ for the anchor. We use the Poisson, or Lichnerowicz,
differential
\[
        \delta=[\sigma,-]
\]
on polyvector fields, with the Schouten convention
\[
\delta(f)=\sigma^\#(df)=:X_f\in H^0(X,T_X),
\qquad
\delta(v)=[\sigma,v]=-L_v(\sigma)\in H^0(X,\wedge^2T_X)
\]
for holomorphic functions $f$ and vector fields $v$.

\subsection{Hamiltonians, Casimirs, and Poisson vector fields}

A holomorphic vector field of the form $X_f=\delta(f)$ is called \emph{Hamiltonian}.
A holomorphic function $f$ is a \emph{Casimir} if $X_f=0$, i.e.\ $\delta(f)=0$.
A holomorphic vector field $v$ is \emph{Poisson} if $\delta(v)=0$, equivalently $L_v(\sigma)=0$.
Thus $H_\sigma^0(X)=\ker(\delta:\OO_X\to T_X)$ is the space of Casimirs and
$$H_\sigma^1(X)=\ker(\delta:T_X\to\wedge^2T_X)/\im(\delta:\OO_X\to T_X)$$ measures the obstruction for a Poisson
vector field to be Hamiltonian.

\subsection{Meromorphic Poisson modules and trace-free rank two connections}

We use the following working definition (equivalent to the standard Lie algebroid formulation on the smooth locus).

\begin{definition}[Meromorphic Poisson module]
A \emph{meromorphic Poisson module} on $(X,\sigma)$ is a holomorphic vector bundle $E\to X$ together with a
$\mathbb C$-linear operator
$$
\nabla^{\mathrm{Pois}}:\ E\longrightarrow T_X(*D)\otimes E
$$
(for some divisor $D$, and with poles along $D$) such that for any holomorphic function $f$ and local section $s$,
$$
\nabla^{\mathrm{Pois}}(fs)=\delta(f)\otimes s+f\,\nabla^{\mathrm{Pois}}(s),
$$
and whose curvature (in the Poisson Lie algebroid sense) vanishes on $X\setminus D$.
When $D=\emptyset$ we simply say \emph{Poisson module}.
\end{definition}

The case $rk(E)=2$ admits a very concrete description after fixing a trivialization.

\begin{definition} 
Assume $E\simeq \OO_X^{\oplus 2}$ on an open set $U\subset X$ with frame $e=(e_1,e_2)$.
A \emph{trace-free} Poisson connection on $E|_U$ is given by a matrix
$$
\Theta=\begin{pmatrix} \frac12 v & w\\ u & -\frac12 v\end{pmatrix},
\qquad
u,v,w\in \mathfrak X_\sigma(U),
$$
acting by
$$
\nabla^{\mathrm{Pois}}(e)=\Theta\cdot e,
\qquad
\nabla^{\mathrm{Pois}}(z_1,z_2)=\big(\delta(z_1),\delta(z_2)\big)+\Theta\cdot(z_1,z_2).
$$
Here $\mathfrak X_\sigma(U)$ denotes the sheaf of (meromorphic) Hamiltonian vector fields on $U$
(or, more invariantly, the image of $\sigma^\#$).
\end{definition}

\subsection{Poisson triples and the Maurer--Cartan system}

The following is the rank two form of the Lie--algebroid description of Poisson modules.  The normalization
chosen here is adapted to the Schouten convention \(\delta(v)=[\sigma,v]=-L_v\sigma\) fixed above.

Let \(E\) be a Poisson module of rank \(2\) on a Poisson projective variety \((X,\sigma)\).  Locally on a
Zariski open set where \(E\) trivializes, the trace-free connection matrix may be written as
\[
\Theta=
\begin{pmatrix}
\frac12 v & w\\
u & -\frac12 v
\end{pmatrix},
\qquad
u,v,w\in\mathfrak X_\sigma(U).
\]
The Poisson-flatness equation is
\[
        \delta\Theta+\Theta\wedge\Theta=0.
\]
A direct computation in \(\mathfrak{sl}_2\) gives
\[
\delta\Theta+\Theta\wedge\Theta=
\begin{pmatrix}
\frac12\delta(v)+u\wedge w & \delta(w)+v\wedge w\\
\delta(u)+u\wedge v & -\frac12\delta(v)-u\wedge w
\end{pmatrix}.
\]
Thus flatness is equivalent to the system
\begin{equation}\label{eq:MC-rank2}
\delta(u)=-u\wedge v,
\qquad
\delta(v)=-2u\wedge w,
\qquad
\delta(w)=-v\wedge w.
\end{equation}
Equivalently, in Lie-derivative notation,
\begin{equation}\label{eq:MC-rank2-Lie}
L_u(\sigma)=u\wedge v,
\qquad
L_v(\sigma)=2u\wedge w,
\qquad
L_w(\sigma)=v\wedge w.
\end{equation}
Conversely, a triple \((u,v,w)\) satisfying \eqref{eq:MC-rank2} defines a flat trace-free Poisson connection
on \(\OO_U^{\oplus2}\), see \cite{Polishchuk97,LGV13,Cor20}.

\begin{remark}
Once one works in a gauge where the Poisson connection is logarithmic, or Euler--Poisson in the effectively non-resonant regime, the matrix \(\Theta\) above becomes a logarithmic \(\mathfrak{sl}_2\)-system along
Hamiltonian leaves.  The preceding local linearisation theorem applies to those rank two systems satisfying
the fixed-principal-part and effective non-resonance hypotheses, and the consequences for
monodromy/holonomy groupoids are then read off from the residue tuple.
\end{remark}

\subsection{A coordinate criterion for Poisson vector fields in affine space}

We record a basic criterion that will be useful in explicit families of quadratic Poisson structures.
Let $(z_0,\dots,z_n)$ be coordinates on $\mathbb C^{n+1}$ and write
$$
\sigma=\sum_{i<j}\sigma_{ij}(z)\,\partial_i\wedge\partial_j,
\qquad
\{z_i,z_j\}:=\sigma_{ij}.
$$
Consider a holomorphic vector field $v=\sum_{k=0}^n v^k\partial_k$ such that each $v^k$ depends only on $z_k$.

\begin{proposition}\label{prop:coord-criterion}
Under the hypothesis above, $v$ is a Poisson vector field ($L_v\sigma=0$) if and only if
\begin{equation}\label{eq:coord-criterion}
\sum_{k=0}^n v^k\,\frac{\partial}{\partial z_k}\{z_i,z_j\}
=
\left(\frac{\partial v^i}{\partial z_i}+\frac{\partial v^j}{\partial z_j}\right)\{z_i,z_j\}
\qquad\text{for all } i<j.
\end{equation}
\end{proposition}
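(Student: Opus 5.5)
Compute $L_v\sigma$ directly in coordinates, exploiting the hypothesis that $v^k$ depends only on $z_k$, which makes the Jacobian of $v$ diagonal.

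Since $L_v$ is a derivation of the Schouten algebra, for the bivector $\sigma=\sum_{i<j}\sigma_{ij}\,\partial_i\wedge\partial_j$ we have
\[
L_v\sigma=\sum_{i<j}\Big( v(\sigma_{ij})\,\partial_i\wedge\partial_j+\sigma_{ij}\,[v,\partial_i]\wedge\partial_j+\sigma_{ij}\,\partial_i\wedge[v,\partial_j]\Big).
\]
The first key step is the bracket $[v,\partial_i]=-\sum_k (\partial_i v^k)\,\partial_k$. Because $v^k$ depends only on $z_k$, the derivative $\partial_i v^k$ vanishes unless $k=i$, so $[v,\partial_i]=-(\partial v^i/\partial z_i)\,\partial_i$. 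Substituting this gives
\[
L_v\sigma=\sum_{i<j}\Big(\sum_k v^k\,\partial_k\sigma_{ij}-\big(\partial_iv^i+\partial_jv^j\big)\sigma_{ij}\Big)\,\partial_i\wedge\partial_j .
\]

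Next, the bivectors $\partial_i\wedge\partial_j$ with $i<j$ form a frame of $\wedge^2T_{\C^{n+1}}$. Hence $L_v\sigma=0$ holds if and only if every coefficient vanishes. Using $\sigma_{ij}=\{z_i,z_j\}$, this is exactly \eqref{eq:coord-criterion}, which proves both implications at once.

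There is no genuine obstacle here. The only point needing care is the sign convention for $[v,\partial_i]$. If the paper's convention is $\delta(v)=[\sigma,v]=-L_v\sigma$, the overall sign is irrelevant, since only the vanishing of $L_v\sigma$ matters.
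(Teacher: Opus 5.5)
Your proof is correct and follows the same route as the paper's. You expand $L_v\sigma$ coefficientwise, use the diagonal Jacobian to get $[v,\partial_i]=-(\partial v^i/\partial z_i)\,\partial_i$, and then read off vanishing in the frame $\{\partial_i\wedge\partial_j\}_{i<j}$. In doing so you make explicit the bracket computation that the paper compresses into ``a standard Lie-derivative computation.''
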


\begin{proof}
A standard Lie-derivative computation gives
$$
L_v(\sigma)=\sum_{i<j}\left(
\sum_{k=0}^n v^k\frac{\partial\sigma_{ij}}{\partial z_k}
-\sigma_{ij}\frac{\partial v^i}{\partial z_i}
-\sigma_{ij}\frac{\partial v^j}{\partial z_j}
\right)\partial_i\wedge\partial_j,
$$
where the hypothesis $v^k=v^k(z_k)$ eliminates mixed partials.
Thus $L_v(\sigma)=0$ is equivalent to the coefficient identity in \eqref{eq:coord-criterion}.
\end{proof}

\begin{remark} \label{rem:diag-quad}
 If $\{z_i,z_j\}=c_{ij}z_iz_j$ with constants $c_{ij}\in\mathbb C$ (a diagonal quadratic Poisson structure) and
$v^k=a_k z_k$, then \eqref{eq:coord-criterion} holds for all $i<j$, hence $L_v(\sigma)=0$.
 For the radial vector field $\mathrm R=\sum_k z_k\partial_k$, the condition $L_{\mathrm R}\sigma=0$ is
equivalent to $\mathrm R(\sigma_{ij})=2\sigma_{ij}$ for all $i<j$, i.e.\ $\sigma$ is homogeneous of degree $2$.
\end{remark}

\subsection{Example: a local affine model associated with the diagonal quadratic component}

We now specialize to the affine cone model associated with the diagonal quadratic component $L(1,1,1,1)$ on $\mathbb P^3$ \cite{Pym18}. We use coordinates $(z_0,z_1,z_2,z_3)$ on $\mathbb C^4$ and brackets $\{z_i,z_j\}=c_{ij}z_iz_j$. Constant vector fields below are vector fields on the affine cone, or on a chosen affine chart; they are not meant to define global vector fields on $\mathbb P^3$ without checking projectability.
In the component usually denoted $L(1,1,1,1)$ one may parametrize the constants by
$a_0+a_1+a_2+a_3=0$ and
\begin{align*}
c_{01}&=a_3-a_2, & c_{02}&=a_1-a_3, & c_{03}&=a_2-a_1,\\
c_{12}&=a_3-a_0, & c_{13}&=a_0-a_2, & c_{23}&=a_1-a_0.
\end{align*}
By Remark~\ref{rem:diag-quad}, every diagonal linear vector field
$u=\sum_{k=0}^3 b_k z_k\partial_k$ preserves $\sigma$.

The next statement describes when a \emph{constant} vector field $u$ satisfies $L_u(\sigma)=u\wedge v$ for a diagonal
linear field $v$; this is the basic input for constructing explicit Poisson triples.

\begin{proposition}\label{prop:Lu=uw}
Let $u=\sum_{k=0}^3 c_k\,\partial_k$ be constant and $v=\sum_{k=0}^3 b_k z_k\partial_k$ diagonal linear.
Then $L_u(\sigma)=u\wedge v$ if and only if for every $i<j$ one has
$$
c_i(c_{ij}-b_j)=0,
\qquad
c_j(c_{ij}+b_i)=0.
$$
\end{proposition}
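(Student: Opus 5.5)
The plan is a direct coefficient comparison in the frame $\{\partial_i\wedge\partial_j\}_{i<j}$ of $\wedge^2T_{\C^4}$. Both sides of $L_u(\sigma)=u\wedge v$ are bivector fields whose coefficients are linear in $z$. So the identity is equivalent to the vanishing of finitely many scalar coefficients, one for each pair $i<j$ and each monomial $z_k$.

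First I compute $L_u(\sigma)$. I use the same Lie-derivative formula as in the proof of Proposition~\ref{prop:coord-criterion}, in its general form
\[
(L_u\sigma)^{ij}=u(\sigma_{ij})-\sum_k\sigma_{kj}\,\partial_k u^i-\sum_k\sigma_{ik}\,\partial_k u^j .
\]
Since $u$ has constant coefficients, the last two sums vanish; equivalently, $[\partial_k,\partial_i]=0$, so only the derivative of the coefficients survives. With $\sigma_{ij}=c_{ij}z_iz_j$ this gives
\[
L_u(\sigma)=\sum_{i<j}c_{ij}\,\bigl(c_i z_j+c_j z_i\bigr)\,\partial_i\wedge\partial_j .
\]
(One could also regard $u$ as a vector field with $u^k$ depending only on $z_k$ and quote Proposition~\ref{prop:coord-criterion} directly. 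However, that proposition addresses $L_u\sigma=0$ rather than $L_u\sigma=u\wedge v$, so I will use its displayed formula instead.)

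Second, I expand $u\wedge v=\sum_{k,l}c_k b_l z_l\,\partial_k\wedge\partial_l$. By antisymmetry, for $i<j$ the coefficient of $\partial_i\wedge\partial_j$ is
\[
c_i b_j z_j - c_j b_i z_i .
\]

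Third, I equate coefficients of $\partial_i\wedge\partial_j$ for each $i<j$ and separate the monomials $z_i$ and $z_j$, which is legitimate since $i\neq j$. The $z_j$-terms give $c_{ij}c_i=c_ib_j$, that is, $c_i(c_{ij}-b_j)=0$. The $z_i$-terms give $c_{ij}c_j=-c_jb_i$, that is, $c_j(c_{ij}+b_i)=0$. Since $\{\partial_i\wedge\partial_j\}_{i<j}$ is a frame and $z_i,z_j$ are linearly independent functions, these scalar identities for all $i<j$ are both necessary and sufficient, which proves the equivalence.

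The only genuine point of care is sign bookkeeping. One must make sure that the convention for $L_u\sigma$ agrees with the Schouten convention $\delta(v)=-L_v\sigma$ fixed at the beginning of Section~\ref{sec:rank2-triples}. One must also track the sign coming from $\partial_j\wedge\partial_i=-\partial_i\wedge\partial_j$ when collecting the $z_i$-term of $u\wedge v$. I expect this to be the main, albeit minor, obstacle, and I would double-check it on the single pair $(i,j)=(0,1)$.
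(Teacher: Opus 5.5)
Your proposal is correct and follows essentially the same route as the paper: use that $u$ has constant coefficients to get $L_u(\sigma)=\sum_{i<j}c_{ij}(c_iz_j+c_jz_i)\,\partial_i\wedge\partial_j$, expand $u\wedge v$ with $(i,j)$-coefficient $c_ib_jz_j-c_jb_iz_i$, and compare the coefficients of $z_j$ and $z_i$ separately. Your worry about the Schouten sign convention does not arise here, since the statement involves only $L_u$ and the wedge product, not $\delta$.
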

\begin{proof}
Write
$$
\sigma=\sum_{0\le i<j\le 3} c_{ij}\,z_i z_j\,\partial_i\wedge\partial_j,
\qquad
u=\sum_{k=0}^3 c_k\,\partial_k,
\qquad
v=\sum_{k=0}^3 b_k z_k\,\partial_k.
$$
Since $u$ is constant, we have $[u,\partial_i]=0$ for all $i$, hence
$L_u(\partial_i\wedge\partial_j)=0$. Therefore
\begin{align*}
L_u(\sigma)
&=\sum_{i<j} c_{ij}\,L_u\bigl(z_i z_j\bigr)\,\partial_i\wedge\partial_j \\
&=\sum_{i<j} c_{ij}\,\bigl(u(z_i)z_j+z_i u(z_j)\bigr)\,\partial_i\wedge\partial_j \\
&=\sum_{i<j} c_{ij}\,\bigl(c_i z_j+c_j z_i\bigr)\,\partial_i\wedge\partial_j,
\end{align*}
because $u(z_i)=c_i$.
On the other hand,
\begin{align*}
u\wedge v
&=\sum_{i<j}\bigl(u^i v^j-u^j v^i\bigr)\,\partial_i\wedge\partial_j \\
&=\sum_{i<j}\bigl(c_i\,(b_j z_j)-c_j\,(b_i z_i)\bigr)\,\partial_i\wedge\partial_j \\
&=\sum_{i<j}\bigl(c_i b_j z_j-c_j b_i z_i\bigr)\,\partial_i\wedge\partial_j,
\end{align*}
since $u^i=c_i$ and $v^j=b_j z_j$.
Thus $L_u(\sigma)=u\wedge v$ holds if and only if for every $i<j$ the coefficients of $z_j$ and of $z_i$
in the $(i,j)$-component coincide. Equivalently, for each $i<j$ one must have
$$
c_{ij}\,c_i = c_i\,b_j,
\qquad
c_{ij}\,c_j = -\,c_j\,b_i.
$$
Rewriting, this is precisely
$$
c_i(c_{ij}-b_j)=0,
\qquad
c_j(c_{ij}+b_i)=0,
$$
for all $i<j$, as claimed.
\end{proof}

\begin{corollary}\label{cor:u=partial0}
Taking $u=\partial_0$, the condition of Proposition~\ref{prop:Lu=uw} forces
$b_1=c_{01}$, $b_2=c_{02}$, $b_3=c_{03}$, while $b_0$ remains free. Thus one may take
$$
v=z_0\partial_0+c_{01}z_1\partial_1+c_{02}z_2\partial_2+c_{03}z_3\partial_3,
$$
and then $L_u(\sigma)=u\wedge v$.
\end{corollary}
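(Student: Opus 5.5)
Corollary~\ref{cor:u=partial0} is a direct specialization of Proposition~\ref{prop:Lu=uw} to $u=\partial_0$, i.e.\ to the constant vector $(c_0,c_1,c_2,c_3)=(1,0,0,0)$. Write out the system
\[
c_i(c_{ij}-b_j)=0,\qquad c_j(c_{ij}+b_i)=0,\qquad 0\le i<j\le 3,
\]
and sort the six pairs $(i,j)$ by whether $0$ is among the indices.

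First take the pairs with $i=0$, namely $(0,1),(0,2),(0,3)$. Here $c_0=1$, so the first equation reads $c_{0j}-b_j=0$, giving $b_j=c_{0j}$ for $j=1,2,3$. The second equation is $c_j(c_{0j}+b_0)=0$, and since $c_j=0$ for $j\ge1$ it holds automatically; in particular it places no condition on $b_0$.

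Next take the pairs $(i,j)$ with $1\le i<j\le 3$. Both $c_i$ and $c_j$ vanish, so both equations are trivially satisfied. These pairs impose nothing, and $b_0$ never appears in a nontrivial equation, so it is free.

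Since Proposition~\ref{prop:Lu=uw} is an equivalence, the relation $L_u(\sigma)=u\wedge v$ for $u=\partial_0$ holds exactly when $b_1=c_{01}$, $b_2=c_{02}$, $b_3=c_{03}$, with $b_0$ arbitrary. Choosing $b_0=1$ gives the displayed field $v=z_0\partial_0+c_{01}z_1\partial_1+c_{02}z_2\partial_2+c_{03}z_3\partial_3$. The only possible pitfall is the sign bookkeeping in the second family of equations, and it does not matter here because those equations are multiplied by a vanishing $c_j$. The constraint $a_0+\cdots+a_3=0$ defining $L(1,1,1,1)$ plays no role; the corollary holds for any diagonal quadratic $\sigma$.
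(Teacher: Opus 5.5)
Your proposal is correct and follows the paper's intended argument: the paper gives no separate proof, since the corollary is the specialization of Proposition~\ref{prop:Lu=uw} to $(c_0,c_1,c_2,c_3)=(1,0,0,0)$. In that case only the equations $c_0(c_{0j}-b_j)=0$ for $j=1,2,3$ are nontrivial, and $b_0$ appears in no nontrivial condition. You also correctly use the ``if and only if'' in the proposition to get both the forced values of $b_1,b_2,b_3$ and the identity $L_u(\sigma)=u\wedge v$ for the choice $b_0=1$.
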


\begin{remark}
Once explicit pairs $(u,v)$ with $L_u(\sigma)=u\wedge v$ are produced, one seeks $w$ so that the full
Maurer--Cartan system \eqref{eq:MC-rank2} holds; in rank two this is exactly the flatness condition for the
trace-free Poisson connection matrix \[
\Theta \;=\;
\begin{pmatrix}
v & w \\
u & -v
\end{pmatrix}.
\]
This is the point where our non-resonant Poincar\'e--Dulac linearization becomes effective in later sections:
after logarithmic normalization, the local groupoid/meridional monodromy is read off from residues of $\Theta$.
\end{remark}

  Let $V\subset\mathbb{C}^4$ be a sufficiently small polydisc with holomorphic coordinates $(z_0,z_1,z_2,z_3)$, and set
\begin{equation}\label{eq:D-U}
D:=\{z_1z_2z_3=0\}\subset V,
\qquad
U:=V\setminus D=\{z_1\neq 0,\ z_2\neq 0,\ z_3\neq 0\}.
\end{equation}
Write $\partial_k:=\partial/\partial z_k$. Fix a log-canonical Poisson bivector with constant coefficients $c_{ij}\in\mathbb{C}$,
\begin{equation}\label{eq:sigma}
\sigma=\sum_{0\le i<j\le 3} c_{ij}\,z_i z_j\,\partial_i\wedge\partial_j.
\end{equation}
On $U$ the logarithmic $1$-forms $dz_i/z_i$ exist for $i=1,2,3$, and we define the corresponding Hamiltonian logarithmic vector fields
\begin{equation}\label{eq:Xi-def}
X_i:=\sigma^\# \Big(\frac{dz_i}{z_i}\Big)\in H^0(U,T_U),\qquad i=1,2,3.
\end{equation}
To compute them explicitly we use the contraction rule that for any $1$-form $\alpha$ and any decomposable bivector $f\,\partial_a\wedge\partial_b$ one has
\begin{equation}\label{eq:contraction}
\iota_\alpha \bigl(f\,\partial_a\wedge\partial_b\bigr)
=f\bigl(\alpha(\partial_a)\,\partial_b-\alpha(\partial_b)\,\partial_a\bigr).
\end{equation}
Taking $\alpha=dz_i/z_i$ gives $\alpha(\partial_k)=\delta_{ik}/z_i$, and summing the contributions of all terms in \eqref{eq:sigma} that contain the index $i$ yields the closed formula
\begin{equation}\label{eq:Xi-closed}
X_i=\sum_{j>i} c_{ij}\,z_j\partial_j-\sum_{k<i} c_{k i}\,z_k\partial_k,\qquad i=1,2,3.
\end{equation}
In particular,
\begin{align}
X_1 &= -c_{01} z_0\partial_0 + c_{12} z_2\partial_2 + c_{13} z_3\partial_3,\label{eq:X1}\\
X_2 &= -c_{02} z_0\partial_0 - c_{12} z_1\partial_1 + c_{23} z_3\partial_3,\label{eq:X2}\\
X_3 &= -c_{03} z_0\partial_0 - c_{13} z_1\partial_1 - c_{23} z_2\partial_2.\label{eq:X3}
\end{align}
Let $\mathcal F$ be the real foliation on $U$ generated by $\Re(X_i)$ and $\Im(X_i)$ for $i=1,2,3$; equivalently, $\mathcal F$ is determined by the complex distribution spanned by $X_1,X_2,X_3$. To organise the boundary geometry and the canonical meridians, pass to the real oriented blow-up $\widetilde V:=\mathrm{Bl}^{\mathrm{or}}_{D}(V)$ with blow-down map $\pi:\widetilde V\to V$. On a collar neighbourhood of the boundary we write
\begin{equation}\label{eq:polar-explicit}
z_i=\rho_i e^{i\theta_i},
\qquad
\rho_i\in[0,\varepsilon),\ \theta_i\in\mathbb{R}/2\pi\mathbb{Z},
\qquad i=1,2,3,
\end{equation}
keeping $z_0$ as an interior coordinate, so that locally
\begin{equation}\label{eq:local-product}
\widetilde V \cong \mathbf D^2_{z_0}\times[0,\varepsilon)^3_{(\rho_1,\rho_2,\rho_3)}\times (S^1)^3_{(\theta_1,\theta_2,\theta_3)}.
\end{equation}
Fix a point $p\in V$ with $z_1(p)=z_2(p)=z_3(p)=0$ (hence $p\in D$). The fibre of $\pi$ over $p$ is the deep-corner torus
\begin{equation}\label{eq:corner-fiber-explicit}
\pi^{-1}(p)=\{z_0=z_0(p),\ \rho_1=\rho_2=\rho_3=0,\ \theta_1,\theta_2,\theta_3\in\mathbb{R}/2\pi\mathbb{Z}\}\ \cong\ (S^1)^3,
\end{equation}
with angular coordinates $(\theta_1,\theta_2,\theta_3)$. The basic identity of smooth differential operators
\begin{equation}\label{eq:operator}
z_i\partial_{z_i}=\tfrac12\bigl(\rho_i\partial_{\rho_i}-i\partial_{\theta_i}\bigr),\qquad i=1,2,3,
\end{equation}
implies that the real and imaginary parts of any holomorphic logarithmic vector field lift smoothly to $\widetilde V$ and remain tangent to every boundary face. In particular, $\mathcal F$ extends canonically to a lifted foliation $\widetilde{\mathcal F}$ on any chosen collar
\begin{equation}\label{eq:collar}
C\subset\widetilde V,
\qquad
C\cong[0,\varepsilon)\times\partial\widetilde V.
\end{equation}
Although each $X_i$ is defined using $dz_i/z_i$, the explicit formula \eqref{eq:Xi-closed} involves diagonal terms $z_j\partial_{z_j}$ with $j\neq i$, and \eqref{eq:operator} shows that along the boundary face $\rho_j=0$ these contribute a surviving angular component $-\tfrac12\partial_{\theta_j}$; hence the angular directions contained in $T\widetilde{\mathcal F}$ along the corner torus \eqref{eq:corner-fiber-explicit} depend only on which of $c_{12},c_{13},c_{23}$ are nonzero. Now let $\Mon(\mathcal F)|_U\rightrightarrows U$ be the leafwise fundamental groupoid of $\mathcal F$ on $U$ (objects are points of $U$ and arrows are leafwise $C^1$ paths modulo leafwise homotopy relative endpoints), and let $\Mon(\widetilde{\mathcal F})|_C\rightrightarrows C$ be the analogous leafwise fundamental groupoid for the lifted foliation on the collar. We adjoin meridional isotropy only along the oriented boundary. Let
\begin{equation}\label{eq:meridional-T}
T_\partial:=\partial\widetilde V\times\mathbb{Z}^3\rightrightarrows \partial\widetilde V,
\end{equation}
whose arrows are pairs $(x,m)$ with $x\in\partial\widetilde V$ and $m\in\mathbb{Z}^3$, with source and target both equal to $x$. The standard basis of $\mathbb{Z}^3$ corresponds to one positive $2\pi$-rotation in each angular variable $\theta_1,\theta_2,\theta_3$. We first form the boundary-enlarged collar groupoid
\begin{equation}\label{eq:product-groupoid}
\mathcal C^{\mathrm{tw}}
:=
\Mon(\widetilde{\mathcal F})|_C
\sqcup_{\Mon(\widetilde{\mathcal F})|_{\partial\widetilde V}}
\bigl(\Mon(\widetilde{\mathcal F})|_{\partial\widetilde V}\times_{\partial\widetilde V}T_\partial\bigr).
\end{equation}
On the overlap $U\cap C$ there is a canonical functor
\begin{equation}\label{eq:ff-overlap}
\Mon(\widetilde{\mathcal F})|_{U\cap C}\longrightarrow \mathcal C^{\mathrm{tw}},
\qquad
\gamma\longmapsto\gamma \text{ with zero meridional label},
\end{equation}
and we identify $\Mon(\widetilde{\mathcal F})|_{U\cap C}\cong \Mon(\mathcal F)|_{U\cap C}$ because the foliations agree on $U\cap C$. The tangentially twisted leafwise groupoid is then defined as the pushout
\begin{equation}\label{eq:pushout}
\Mon^{\mathrm{tw}}(\mathcal F;D)
:=
\Mon(\mathcal F)|_U\ \sqcup_{\Mon(\widetilde{\mathcal F})|_{U\cap C}}\ \mathcal C^{\mathrm{tw}}.
\end{equation}
Thus no extra $\mathbb{Z}^3$-isotropy is introduced at interior points; the added meridional labels live on the oriented boundary.
Inside the boundary-enlarged collar groupoid there is the canonical embedding of pure meridians,
\begin{equation}\label{eq:meridional-embed}
T_\partial\hookrightarrow \mathcal C^{\mathrm{tw}},
\qquad
(x,m)\longmapsto (\mathrm{id}_x,m),
\end{equation}
and pushout functoriality turns it into a canonical meridional subgroupoid of $\Mon^{\mathrm{tw}}(\mathcal F;D)$. Restricting to the corner torus \eqref{eq:corner-fiber-explicit} yields the constant bundle groupoid
\begin{equation}\label{eq:corner-bundle-groupoid-explicit}
\pi^{-1}(p)\times\mathbb{Z}^3\rightrightarrows \pi^{-1}(p),
\end{equation}
so the meridional isotropy at each object of $\pi^{-1}(p)\cong(S^1)^3$ is canonically $\mathbb{Z}^3$. The full restriction of $\Mon^{\mathrm{tw}}(\mathcal F;D)$ to $\pi^{-1}(p)$ also contains the leafwise fundamental group contributed by $\widetilde{\mathcal F}$ along the corner: the angular directions present along $\pi^{-1}(p)$ integrate to a subtorus $(S^1)^k\subset (S^1)^3$ with fundamental group $\mathbb{Z}^k$, and the Morita type is
\begin{equation}\label{eq:morita-general-explicit}
\Mon^{\mathrm{tw}}(\mathcal F;D)\big|_{\pi^{-1}(p)}\ \simeq_{\mathrm{Morita}}\ \mathbb{Z}^k\times\mathbb{Z}^3.
\end{equation}
In the generic regime
\begin{equation}\label{eq:generic-regime}
c_{12}\neq 0,\qquad c_{13}\neq 0,\qquad c_{23}\neq 0,
\end{equation}
one reads off from \eqref{eq:X1}--\eqref{eq:X3} and \eqref{eq:operator} that $\widetilde{\mathcal F}$ is tangent along $\pi^{-1}(p)$ to all three angular directions $\partial_{\theta_1},\partial_{\theta_2},\partial_{\theta_3}$, hence $k=3$ and
\begin{equation}\label{eq:morita-generic-explicit}
\Mon^{\mathrm{tw}}(\mathcal F;D)\big|_{\pi^{-1}(p)}\ \simeq_{\mathrm{Morita}}\ \mathbb{Z}^3\times\mathbb{Z}^3.
\end{equation}
Finally, assume we are given a Poisson-flat connection on a rank-$e$ bundle over $U$, hence a leafwise monodromy representation $\rho:\Mon(\mathcal F)|_U\to \mathrm{GL}_e(\mathbb{C})$, and assume that Euler--Poisson (PPD) normalization in an effectively non-resonant regime puts the logarithmic part of the connection in the normal form
\begin{equation}\label{eq:normal-form}
\Theta_{\mathrm{nf}}=\sum_{i=1}^3 \widetilde A_i\,X_i,
\qquad
[\widetilde A_i,\widetilde A_j]=0,
\qquad
\widetilde A_i\in\mathrm{Mat}_{e\times e}(\mathbb{C})\ \text{constant}.
\end{equation}
For $i=1,2,3$, let $t_i$ be the loop in the corner torus \eqref{eq:corner-fiber-explicit} obtained by fixing all variables except $\theta_i$ and letting $\theta_i$ run once from $0$ to $2\pi$; then $\int_{t_i} dz_i/z_i=2\pi i$, so the parallel-transport convention for $\nabla=d+\omega$ gives
\begin{equation}\label{eq:holonomy-meridian}
\rho^{\mathrm{tw}}(t_i)=\exp(-2\pi i\,\widetilde A_i),\qquad i=1,2,3,
\end{equation}
and, more generally, the commuting residue matrices define a character of the canonical meridional group $\mathbb{Z}^3$ by
\begin{equation}\label{eq:meridional-character}
(m_1,m_2,m_3)\longmapsto
\exp\Bigl(-2\pi i\sum_{i=1}^3 m_i\,\widetilde A_i\Bigr).
\end{equation}
Equivalently, the normal form provides the meridional boundary character \eqref{eq:meridional-character}. Together with the interior leafwise monodromy representation $\rho$, it gives a representation of the boundary-twisted groupoid whenever the boundary leafwise transport is compatible with the simultaneous centralizer of the residue tuple. Its restriction to the canonical meridional subgroupoid \eqref{eq:corner-bundle-groupoid-explicit} is exactly \eqref{eq:meridional-character}.

\begin{figure}[t]
\centering
\begin{tikzcd}[row sep=3.2em, column sep=4.0em]
\Mon(\widetilde{\mathcal F})|_{U\cap C}
\arrow[r, "{\gamma\mapsto(\gamma,0)}"]
\arrow[d, hook]
&
\mathcal C^{\mathrm{tw}}
\arrow[d]
\\
\Mon(\mathcal F)|_U
\arrow[r]
&
\Mon^{\mathrm{tw}}(\mathcal F;D)
\end{tikzcd}
\caption{Tangential completion as a pushout: the collar is enlarged by meridional labels only on the oriented boundary, and the overlap with the interior carries zero meridional label.}
\label{fig:pushout-diagram}
\end{figure}

\section*{Acknowledgements}
MC is partially supported by the Universit\`a degli Studi di Bari and by PRIN 2022MWPMAB, ``Interactions between Geometric Structures and Function Theories.'' He is a member of INdAM-GNSAGA.

\bibliographystyle{alpha}

\end{document}